\documentclass[12pt]{amsart}
\usepackage{amsmath,amsfonts,amsthm,amscd, amssymb,mathrsfs}
\usepackage{enumitem}
\usepackage{xcolor}
\usepackage[sort,nocompress]{cite}
\usepackage{hyperref}
\usepackage{tikz, tikz-cd}
\usetikzlibrary{calc}
\usepackage{bm}
\newtheorem{theorem}{Theorem}[section]

\newtheorem{proposition}[theorem]{Proposition}
\newtheorem{lemma}[theorem]{Lemma}
\newtheorem{corollary}[theorem]{Corollary}
\theoremstyle{definition}

\newtheorem{remark}[theorem]{Remark}

\newcommand{\PP}{\mathbb{P}}
\newcommand{\CC}{\mathbb{C}}
\newcommand{\RR}{\mathbb{R}}
\newcommand{\ZZ}{\mathbb{Z}}
\newcommand{\QQ}{\mathbb{Q}}

\newcommand{\Coker}{{\rm{Coker}}}

\newcommand{\red}{\mathrm{red}}

\DeclareMathOperator{\Ker}{Ker}
\DeclareMathOperator{\Hom}{Hom}
\DeclareMathOperator{\Image}{Image}

\usepackage[letterpaper]{geometry}
\numberwithin{equation}{section}
\begin{document}
\title[Logarithmic transformations on the six-sphere]{Logarithmic transformations on the six-sphere}
\author{Jeff Viaclovsky}
\address{Department of Mathematics, University of California, Irvine, USA}
\email{jviaclov@uci.edu}
\date{October 28, 2026.}
\begin{abstract}
We show that there are infinitely many choices of the multiplicities in the logarithmic transformation step of the Alp\"oge--Claude construction which produce a complex threefold diffeomorphic to the six-sphere. Moreover, distinct choices of the multiplicities yield biholomorphically inequivalent complex structures.
\end{abstract}
\maketitle
\tableofcontents

\section{Introduction}
In \cite{EngelArxiv}, Engel explained the construction of Alp\"oge--Claude \cite{AC}, 
starting from a rational elliptic surface $\pi: S \rightarrow \PP^1$ with 
singular fibers of type $\mathrm{IV}^*, \mathrm{III}, \mathrm{I}_1$
over the points $0, 1, \infty$, respectively. 
For a complex number $u$ in a sufficiently small punctured disc, Engel constructs a fibration $X^{\circ}(u)\to\PP^1\setminus\{\infty\}$ with smooth complex $2$-torus fibers away from $0$ and $1$. This can be filled in at infinity by a rank $2$ Mumford hexagonal degeneration to get a fibration $X(u) \rightarrow \PP^1$. 
The added singular fiber over infinity has $3$ double curves, $2$ triple points, and normalizes to a degree $6$ del Pezzo surface.  
The space $X(u)$ does not yet have the topology of $S^6$. The next step is to perform twisted logarithmic transformations to the fibration at $0$ and $1$, to replace  $X_0$  and $X_1$ with smooth bielliptic surfaces having multiplicities $3$ and $4$, respectively. This produces a fibration $X_{3,4}(u) \rightarrow \PP^1$.  For suitable choice of the logarithmic transformation data, the total space $X_{3,4}(u)$ is diffeomorphic to $S^6$. The main result in this paper is the following generalization of this construction.  
\begin{theorem} 
\label{t:main} 
Let $u \in \CC$ with $|u|$ sufficiently small and non-zero. Given positive integers $k$ and $l$ with $\mathrm{gcd}(3k, 4l) =1$, one can perform 
twisted logarithmic transformations of order $3k$ and $4l$ over the points $0$ and $1$, respectively, so that $X_{3k,4l}(u)$ is diffeomorphic to $S^6$. 
If $X_{3k,4l}(u)$ is biholomorphic to $X_{3k',4l'}(u')$, then $k = k'$ and $l =l'$.
Moreover, the local Kuranishi dimension for every such $X_{3k,4l}(u)$ is at least $1$. 
\end{theorem}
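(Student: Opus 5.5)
The plan has three parts, matching the three assertions of Theorem~\ref{t:main}: topology, biholomorphic distinctness, and deformations.

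\textbf{Topology.} I will follow Engel's treatment of the case $(3,4)$ \cite{EngelArxiv} and track where the multiplicities enter.

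Write $X^\circ(u)$ for the complement of tubular neighborhoods of $X_0$ and $X_1$. Near each of these fibers the fibration is a $T^4$-bundle over a circle, with monodromy of order $3$ at $0$ and order $4$ at $1$ (from $\mathrm{IV}^*$ and $\mathrm{III}$). A twisted logarithmic transformation of order $m$ glues in $(D\times T^4)/\ZZ_m$. Here $\ZZ_m$ acts on the disc by rotation and on $T^4$ by the monodromy composed with a translation of order $m$, so the central fiber is a bielliptic quotient of multiplicity $m$.

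For $m=3k$, I choose the translation to lie in the fixed subtorus of the order-$3$ automorphism, with order exactly $3k$ there; similarly for $m=4l$. The steps are then:
\begin{enumerate}
\item Compute $\pi_1$ by van Kampen. $\pi_1(X(u))$ is generated by the fiber lattice $\ZZ^4$ modulo monodromy coinvariants, together with loops around $0$ and $1$; the fiber at infinity kills part of the lattice. After the transformations, the class of a loop around $0$ becomes a $3k$-th root of a fiber translation, and the loop around $1$ becomes a $4l$-th root of another.
\item Use $\gcd(3k,4l)=1$ with the relation coming from $\PP^1$ (the product of the loops is trivial) to show that $\pi_1$ is trivial. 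This is exactly where the gcd hypothesis is used: the surviving cyclic quotient has order $\gcd(3k,4l)$.
\item Compute $H_*$ with the Mayer--Vietoris/Leray spectral sequence, and show $H_2=H_3=0$. Here $H_2$ and $H_4$ are torsion controlled by the same coprime data, and $\chi=2$ because the bielliptic fibers have $\chi=0$ and $\chi(X_\infty)$ is what it was in the $(3,4)$ case.
\item Conclude with Smale's $h$-cobordism theorem: a simply connected closed $6$-manifold with the homology of $S^6$ is homeomorphic, hence diffeomorphic in dimension~6, to $S^6$.
\end{enumerate}

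\textbf{Distinctness.} I will show that the fibration is intrinsic. Let $a(X)$ be the algebraic dimension. I expect $a(X)=1$: the map to $\PP^1$ is the algebraic reduction, since the generic fiber is a non-algebraic torus for generic $u$, and Engel's construction gives a very general period. Then any biholomorphism $X_{3k,4l}(u)\to X_{3k',4l'}(u')$ commutes with the algebraic reductions, so it induces an automorphism of $\PP^1$ carrying multiple fibers to multiple fibers and preserving multiplicities. The fiber types distinguish $0$ from $1$: the fiber at $\infty$ is the only non-normal one, and the multiplicities $3k$ and $4l$ have different residues mod $3$ and $4$ by coprimality. Hence $3k=3k'$ and $4l=4l'$.

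\textbf{Deformations.} Varying $u$ in the punctured disc gives a one-parameter family. To get Kuranishi dimension at least $1$, I need the family not to be locally trivial, so that the Kodaira--Spencer map is nonzero. I will show this via the period (the $j$-type invariant) of the smooth torus fibers near $\infty$, which varies with $u$ and is a biholomorphic invariant by the preceding paragraph. Alternatively, I can use upper semicontinuity together with $H^1(X,\Theta_X)\neq 0$.

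\textbf{Main obstacle.} The hardest step is the $\pi_1$ and homology computation for general $k$ and $l$, specifically choosing the twisting translations so that the extra roots generate consistently and no new torsion appears in $H_2$. I would first redo Engel's $(3,4)$ computation symbolically and check that the translations of order $3k$ and $4l$ still generate the required quotient.
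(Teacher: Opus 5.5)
\textbf{Main gap (steps 2--3).} You claim the surviving cyclic quotient of $\pi_1$ has order $\gcd(3k,4l)$, so that coprimality of the multiplicities alone gives $\pi_1=1$. This is false.

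The fiber at infinity kills $\delta$, $a$ and $x_\infty$, and then $b$ dies via $x_1ax_1^{-1}=b^{-1}$. But the class $c=[d]$, detected by the invariant covector $\psi=d^\vee$, survives. So before filling $0$ and $1$ you still have $\ZZ^2=\langle c,x\rangle$, with $x=x_0=x_1^{-1}$.

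Write $v_i=m_ia_i=r_i\delta+n_iu_i$, so that $n_i=m_i\psi(a_i)$. The filling relations $x_0^{m_0}=v_0$ and $x_1^{m_1}=v_1$ become $m_0x=n_0c$ and $-m_1x=n_1c$. Hence $\pi_1(X)$ is cyclic of order $|m_1n_0+m_0n_1|$, which depends on the translations and not just on the multiplicities. Already for $(m_0,m_1)=(3,4)$, the choice $n_0=n_1=1$ gives translations of exact order $m_i$ in the fixed subtori and free actions, yet $\pi_1(X)\cong\ZZ/7$.

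The missing idea is that the translations must be chosen with $m_1n_0+m_0n_1=\pm1$, i.e.\ $m_0m_1\psi(a_0+a_1)=\pm1$. The hypothesis $\gcd(3k,4l)=1$ enters only through B\'ezout, to guarantee that such $n_0,n_1$ exist. B\'ezout then forces $\gcd(n_i,m_i)=1$, which is the condition that makes the cyclic actions free.

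Your criterion (exact order $m_i$) is not the relevant one even for freeness. For $m_0=3$ and $a_0=\delta/3$, one has $-a_0\equiv a$ modulo the real image of $A_0-I$, so $\widetilde\rho_0$ has fixed points on the central fiber.

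The same determinant governs step 3. In the paper, after removing the two Mumford relations from $H_2(V)\cong\ZZ^6$, $H_2(X)$ is the cokernel of an explicit $4\times 4$ integral matrix with determinant $m_1n_0+m_0n_1$. So $H_2(X)=0$ holds exactly for the B\'ezout choice, not for arbitrary translations. As written, your plan never imposes the condition that kills $\pi_1$ and $H_2$.

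\textbf{Secondary points.}

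\emph{Distinctness.} You rely on $a(X)=1$, which you only ``expect'', and only for generic $u$, whereas the theorem concerns every small $u\neq0$. Your justification (the generic fiber is a non-algebraic torus) would not suffice anyway, since a non-algebraic $2$-torus can have algebraic dimension one. The paper bypasses algebraic dimension. Since $H^2(X;\RR)=0$, for $g=f_{X'}\circ\Phi$ the form $g^*\omega_{\mathrm{FS}}$ is exact. So $\int_T g^*\omega_{\mathrm{FS}}\wedge\omega_T=0$ on every K\"ahler torus fiber $T$, which forces $g|_T$ to be constant and yields $g=h\circ f_X$ with $h$ an automorphism of $\PP^1$. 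Your multiplicity argument excluding a swap of $0$ and $1$ ($3\mid 3k$ but $3\nmid 4l'$) is fine, and replaces the paper's use of the monodromy orders.

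\emph{Kuranishi bound.} Your main idea is the paper's, but ``the period varies with $u$'' still needs the countability of integral markings to conclude that the torus family is not locally isotrivial. The alternative via $H^1(X,\Theta_X)\neq0$ fails, since obstructions can make the Kuranishi space zero-dimensional.

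\emph{Smoothing.} Passing from a homotopy $6$-sphere to a diffeomorphism with $S^6$ uses $\Theta_6=0$ (Kervaire--Milnor).
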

We note that $X_{3k,4l}(u)$ also depends upon choices of admissible translation vectors $a_0$ and~$a_1$.  We will refer to the $4$-tuple $(m_0,a_0, m_1, a_1)$ of choices of multiplicities and translation vectors as \textit{logarithmic transformation data}.

\begin{remark} One can use $X_{3k,4l}(u)$ as the starting point for the combs construction of the author in ~\cite{Combs} to produce even more new examples of complex structures on the smooth manifold $S^6$ with arbitrarily large local Kuranishi dimension.  Blowing up at a point yields a similar phenomenon on the smooth manifold underlying $\PP^3$.  This is a relatively straightforward extension, but to keep this paper focused we will not discuss this further here. The results from \cite{Combs} are not used in this paper.  
\end{remark} 

\subsection{Questions}\label{future}
We list here some questions for future research. 
\begin{enumerate}
\item  How are the families for different values of $k,l$ related in the
moduli space? Do they belong to distinct deformation classes?

\item What is the local Kuranishi dimension? Does varying \(u\), with the logarithmic transformation data fixed, give all sufficiently small deformations?

\item For fixed multiplicities and fixed $u$, do different choices of the admissible translation vectors produce even more biholomorphically distinct complex structures on $S^6$?

\item Other starting rational elliptic surfaces have been found to produce complex structures on $S^6$ \cite{Explorer, LR26, Via}, corresponding to many cases on Oguiso-Shioda's list with Mordell-Weil group $\ZZ$ \cite{OS}. 
Which other starting configurations admit infinitely many allowable multiplicities also yielding $S^6$?

\end{enumerate}

\subsection{Outline} 
This manuscript was originally intended to only be lecture notes to give an alternative presentation of some of the difficult computations in~\cite{EngelArxiv}. 
In Section~\ref{back}, we recall all the necessary details from~\cite{EngelArxiv},
and write down the monodromy matrices in a convenient form,
where one can see the original monodromy matrix of the rational elliptic surface as the central $2 \times 2$ block; this is similar to how Wang writes the monodromy matrices in \cite{Wang}. 
Note the monodromy matrices are independent of the logarithmic transformation data. Subsection~\ref{fillings} is where we introduce the logarithmic transformations. 
In Section~\ref{fund}, we compute the fundamental group using Van Kampen's Theorem. The fundamental group relies on this additional logarithmic transformation data, not only on the monodromy. For the computation of the integral homology groups in Section~\ref{hom}, we give a slightly different approach than in~\cite{EngelArxiv}. We show that, if the total space is simply-connected, then vanishing of the second integral homology group reduces to showing that a certain explicit $4 \times 4$ integral matrix, depending only upon the logarithmic transformation data, is unimodular.  Once the calculation was written in this way, the author then realized that $S^6$ also arises for infinitely many choices of the multiplicities, and the evolution of this manuscript from lecture notes to the present form was the result of that. 

\subsection{AI Statement} AI tools were used to help in drafting parts of the paper and performing some computational steps, under explicit directions from the author. All arguments and calculations have been verified by the author, who takes responsibility for the mathematical content.  

\subsection{Acknowledgements} The author would like to thank Philip Engel, Nobuhiro Honda, Wenfei Liu, and S\"onke Rollenske for helpful discussions. This research was partially supported by NSF Grant DMS-2404195.

\section{Background}
\label{back}

We briefly summarize the construction of Alp\"oge-Claude \cite{AC}, as explained by~\cite{EngelArxiv}.  Take a rational elliptic surface $\pi : S \rightarrow \PP^1$ with singular fibers of Kodaira types $\mathrm{IV}^*, \mathrm{III}, \mathrm{I}_1$ over the  points $0,1, \infty$, respectively.  This fibration has Mordell--Weil group $\ZZ$,  generated by a section $P$, and zero section $O$. Consider the line bundle $M = \mathcal{O}_S(P - O)$,
and let $M^{\circ}$ denote its restriction to $S \setminus \pi^{-1}(\infty)$. 
Let $0_M:S\to M$ denote the zero section of the line
bundle, and let $M^{\times,\circ} = M^\circ \setminus 0_M(S)$.
Choose a reference section $\phi\in\mathcal{H}om(t_{6P}^{*}M,M)(S)$ which is an isomorphism away from the fiber over $\infty$ and has a simple zero along that fiber, as in~\cite[Corollary~2.2]{EngelArxiv}. For $u\ne0$ with $|u|$ sufficiently small, the map $u\phi$ induces an automorphism of $M^{\times,\circ}$ covering translation by $-6P$ on $S\setminus\pi^{-1}(\infty)$. Taking the quotient by the cyclic group generated by this automorphism gives a fibration $X^{\circ}(u)\to\PP^1\setminus\{\infty\}$, with singular fibers over $0$ and $1$ and all other fibers smooth complex $2$-tori.

The fibration is filled in over $\infty$ by a Mumford rank $2$ hexagonal degeneration,
which adds a singular fiber with 3 double curves and 2 triple points, which normalizes to a degree $6$ del Pezzo surface $dP_6$. This produces a fibration $X(u) \rightarrow \PP^1$.
The fiber $X_0$ over $0$ has $7$ irreducible components consisting of two primary Hopf surfaces and five ruled surfaces over elliptic curves; there are $7$ components because the $\mathrm{IV}^*$ fiber consists of $7$ irreducible rational curves. The fiber $X_1$ over $1$ consists of two primary Hopf surfaces; there are $2$ components because the $\mathrm{III}$ fiber consists of $2$ irreducible rational curves. We will see that both of these fibers will be discarded and replaced with non-reduced fibers (with smooth reductions) in the logarithmic transformation step. 

Fix the parameter $u$ of Engel's construction, and put
$U=\PP^1\setminus\{0,1,\infty\}$ and $V=X(u)|_U$.
Write $f':V\to U$ for the restricted fibration.
We fix a section $\sigma:\PP^1\to X(u)$ as follows.
Since $M|_O \cong \mathcal O_{\PP^1}(1)$, choose a holomorphic section
$\eta:O\to M|_O$ with a simple zero over $\infty$ and no other zeros.
Away from $\infty$, $\eta$ chooses a nonzero vector above each point
$O(t)$, so its image lies in $M^{\times,\circ}$. Its quotient image
defines a section of $X^\circ(u)$, which extends to $\sigma$ over the
Mumford filling by~\cite[Proposition~2.9]{EngelArxiv}. 
Fix a regular value $t$ and identify
$\Lambda=H_1(X(u)_t;\ZZ)\cong\ZZ^4$.
Choose loops $\gamma_0,\gamma_1,\gamma_\infty$ based at $t$,
each following a path from $t$ to a small counterclockwise circle around the corresponding  point and returning along the same path to $t$, with $\gamma_0\gamma_1\gamma_\infty=1$. 
Matrices act on column vectors, and in a product of paths the rightmost path is traversed first. With these conventions, $\gamma_0\gamma_1\gamma_\infty=1$ gives $A_0A_1A_\infty=I$.
\begin{lemma}
\label{mono}
There is an integral basis $(\delta,a,b,d)$ of $\Lambda$ in which the
homological monodromies are given by 
\begin{align}
\label{amat}
 A_0=\begin{pmatrix}
 1&-1&0&2\\0&-1&-1&6\\0&1&0&0\\0&0&0&1
 \end{pmatrix},
 A_1=\begin{pmatrix}
 1&0&1&-3\\0&0&1&0\\0&-1&0&6\\0&0&0&1
 \end{pmatrix},
 A_\infty=\begin{pmatrix}
 1&0&0&1\\0&1&1&0\\0&0&1&0\\0&0&0&1
 \end{pmatrix}.
\end{align}
\end{lemma}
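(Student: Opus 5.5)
The plan is to compute the monodromy from the structure of Engel's construction: there is a short exact sequence of local systems on $U$
\[
0 \to \ZZ\delta \to \Lambda \to \Lambda/\ZZ\delta \to 0
\]
and a dual piece $\ZZ d$ on the quotient side, coming from the fact that $X(u)_t$ is the quotient of $M^{\times}|_{E_t}$ (a $\CC^*$-bundle over the elliptic curve $E_t = \pi^{-1}(t)$) by the automorphism $u\phi$ covering translation by $-6P$. Concretely, $\delta$ is the class of the $S^1$ fiber of the $\CC^*$-bundle, $a,b$ are lifts of a symplectic basis of $H_1(E_t;\ZZ)$ to the torus, and $d$ is a loop which travels from a point $x$ in $M^\times_{E_t}$ to its image under $u\phi$, closed up by a path in the fiber of $M^\times \to E_t$ over the segment from $p$ to $p-6P(t)$. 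In such a basis the monodromy is block upper triangular with $1$'s in the corners. The central $2\times 2$ block is the monodromy of $S$ on $H_1(E_t)$. Indeed, the central blocks $\begin{pmatrix}-1&-1\\1&0\end{pmatrix}$, $\begin{pmatrix}0&1\\-1&0\end{pmatrix}$, $\begin{pmatrix}1&1\\0&1\end{pmatrix}$ have orders $3$, $4$ and are unipotent, respectively, matching $\mathrm{IV}^*$, $\mathrm{III}$, $\mathrm{I}_1$. So the first step is to choose $a,b$ so that the rational elliptic surface monodromies take exactly these forms with product the identity, which is standard.

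Second, I would compute the last column, i.e.\ how $d$ transforms. Going around $\gamma_j$, the chosen closing path for $d$ changes. The difference $A_j d - d$ projects to $H_1(E_t)$ as (monodromy minus identity) applied to the class of the translation $-6P(t)$, read as a path in the universal cover of $E_t$. This is the standard variation of the section $6P$: $P$ is a section, and its local lifts determine the entries $(2,6)$, $(-3,0)$-type components in rows $2,3$ of the fourth column. Note the central block minus the identity must map a lift vector $v_j$ of $-6P$ to the displayed column entries. For $A_0$ this gives $(6,0)$, for $A_1$ it gives $(0,6)$, and for $A_\infty$ it gives $(0,0)$; this is consistent with $6P$ being a multiple of $6$ in a local trivialization.

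Third, I would fix the $\delta$-row. This row records the degree, or first Chern class, of $M$ restricted to the relevant chains. The entries in positions $(1,2),(1,3)$ come from the twisting of the $\CC^*$-bundle $M$ over the vanishing cycles; the component degrees of $P-O$ on the $\mathrm{IV}^*$ and $\mathrm{III}$ fibers give $-1,0$ and $0,1$. The corner entries $2,-3,1$ come from $\phi$, which has a simple zero on the fiber at $\infty$ (giving the $1$ in $A_\infty$), together with the winding of $\phi$ around $0$ and $1$. After computing the entries, I would check $A_0A_1A_\infty=I$ as a consistency test and use it to pin down any ambiguous corner entry, since basis changes $d\mapsto d+n\delta$ and $a,b\mapsto a+\cdot\,\delta$ can absorb choices.

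The main obstacle is the last step: determining the $\delta$-row entries, especially the corner terms, which requires carefully tracking the Chern class of $M$ and the zero of $\phi$ on local trivializations. My first approach would be to determine the central block and the fourth column geometrically, and then solve for the remaining $\delta$-row entries using $A_0A_1A_\infty=I$, the requirement $A_0^3$ and $A_1^4$ unipotent, and the geometric input that $A_\infty$ is the hexagonal Mumford monodromy with $\delta$-corner $1$.
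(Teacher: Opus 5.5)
\textbf{The proposal has genuine gaps: the key entries are asserted rather than derived, and the step meant to pin down the $\delta$-row fails as stated.}

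Your route differs from the paper's. The paper does not recompute anything: it takes the explicit matrices of Engel's Theorem~5.1 and conjugates them by the determinant-one change of basis $\delta=-2e_1-e_2+3e_3$, $a=e_1-e_3$, $b=e_3$, $d=e_4-2e_1$. A first-principles derivation is possible in principle, but your proposal does not carry it out, because the fourth column and the $\delta$-row are read off from the target.

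\textbf{The fourth column and the $\delta$-row.} The claim ``for $A_0$ this gives $(6,0)$'' presupposes the local lifts of $-6P$ near $0,1,\infty$ in one common basis, and you never determine these. The entries in positions $(1,2),(1,3)$ are not intrinsic: under $a\mapsto a+x\delta$, $b\mapsto b+y\delta$, the first row $(1,r_j)$ of $A_j$ changes by $r_j\mapsto r_j+Y(I-C_j)$ with $Y=(x,y,\ast)$. So these entries cannot be ``component degrees of $P-O$''. Only cohomology classes are meaningful, and once the central blocks are fixed each is a single integer. For the $d$-column, after imposing $(A_\infty-I)^2=0$, the class $(c_0,c_1)$ modulo $\bigl((\bar C_0-I)w,(\bar C_1-I)w\bigr)$ lies in a group isomorphic to $\ZZ$, and the displayed matrices realize $6$ times a generator. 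Proving this requires an honest computation of the class of the section $P$. Your check that the entries are ``multiples of $6$'' does not distinguish $6P$ from $12P$.

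\textbf{The constraint step fails.} The lower $3\times3$ blocks already satisfy $C_0^3=I$ and $C_1^4=I$, so $A_0^3$ and $A_1^4$ are automatically unipotent, and that requirement imposes nothing. Concretely, replace the corner entries $2$ and $-3$ of $A_0$ and $A_1$ by $3$ and $-4$. Then $A_0A_1A_\infty=I$ still holds with $A_\infty$ unchanged (still of Mumford form with $\delta$-corner $1$), yet the new $A_0^3$ has first row $(1,0,0,3)$. So your constraints leave a two-parameter family modulo basis change.

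\textbf{What is missing.} The missing input is $A_0^3=I$ and $A_1^4=I$, which comes from Engel's smooth good-reduction models over $q=z^3$ and $q=z^4$. Write $(1,r_a,r_b,r_d)$ for a first row. Then finite order gives $2r_a+2r_b+r_d=0$ for $A_0$ and $3r_a+3r_b+r_d=0$ for $A_1$. Together with $(A_\infty-I)^2=0$, linear algebra then forces the first rows, after a change $a\mapsto a+x\delta$, $b\mapsto b+y\delta$, to be $(1,c\,r_j)$. Here $r_j$ are the displayed first rows and $c$ is the $\delta$-corner of $A_\infty$. So the lemma reduces to two geometric integers, $|c|=1$ and the multiple $6$ in the $d$-column. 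Your proposal assumes both rather than proving them. The paper avoids all of this by importing Engel's computed matrices.
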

\begin{proof}
We use the monodromy matrices in~\cite[Theorem~5.1]{EngelArxiv}. Let $(e_1,e_2,e_3,e_4)$ be Engel's basis, and define
\begin{align}
 \delta=-2e_1-e_2+3e_3,\qquad
 a=e_1-e_3,\qquad b=e_3,\qquad d=e_4-2e_1.
 \label{basis}
\end{align}
This change of basis has determinant one. Substitution in Engel's
matrices, with $T_1$ written as in his equation~(6.1), gives
\eqref{amat}.
\end{proof}
\begin{remark}The class $\delta$ represents the positively oriented circle in the original $\CC^\times$ fiber. The matrices show that the globally monodromy-invariant sublattice of $\Lambda$ is exactly $\ZZ\delta$.
\end{remark}
Consider the dual basis $(\delta^{\vee}, a^{\vee}, b^{\vee}, \psi)$ of $\Lambda^{\vee}$, where $\psi=d^\vee$. The matrices give the following:
\begin{align}
\label{invar1}
 H^0(U,R^1f'_*\ZZ)&=\ZZ\psi,  \quad \psi=d^\vee,\\
\label{invar2}
 H^0(U,R^2f'_*\ZZ)&=\ZZ\xi ,  \quad \xi=a^\vee\wedge b^\vee+6d^\vee\wedge\delta^\vee.
\end{align}
These agree with~\cite[Lemmas~3.8 and~7.3]{EngelArxiv}.
For \eqref{invar1}, note that an invariant covector
annihilates $(A_\infty-I)\Lambda=\ZZ\delta+\ZZ a$; invariance under
$A_1$, which sends $a$ to $-b$, then forces it to annihilate $b$.
The form $\xi$ in \eqref{invar2} is monodromy invariant. 
For $i \in \{0,1, \infty\}$, denote
\begin{align}
\Lambda^{A_i} := \{ \lambda \in \Lambda \ | \ A_i \lambda = \lambda \} = \Ker(A_i - I). 
\end{align}
Defining the elements
\begin{align}
u_0=d+2a+2b, \quad u_1=d+3a+3b,
 \label{uvect}
\end{align}
we have 
\begin{align}
 \Lambda^{A_i} =\ZZ\delta\oplus\ZZ u_i\quad(i=0,1).
\end{align}
We also have 
\begin{align}
K_\infty:= \Lambda^{A_{\infty}} =(A_\infty-I)\Lambda =\ZZ\delta\oplus\ZZ a.
 \label{fixlat}
\end{align}

\subsection{The fillings and logarithmic transformations}
\label{fillings}

For notational simplicity, we denote the total space after the logarithmic
transformations by $X$, and write $f:X\to\PP^1$ for the completed
fibration. Choose disjoint discs
$\Delta_i$, closed subdiscs $D_i$, and their positive boundary circles
$S_i^1=\partial D_i$, and set
\begin{align}
 V_i=f^{-1}(\Delta_i),\qquad N_i=f^{-1}(D_i),\qquad
 M_i^5=\partial N_i=f^{-1}(S_i^1),\qquad F_i=f^{-1}(i)_{\red},
 \label{opens}
\end{align}
for $i \in \{0,1, \infty\}$.
Then $N_i\simeq V_i\simeq F_i$ by the local deformation retraction
\cite[Proposition~3.1]{HVFibrationsPublished}, and $V\cap V_i\simeq M_i^5$.
The Mumford fiber $F_\infty$ has normalization $P=dP_6$;
opposite sides of its boundary hexagon $D$ are identified
~\cite[Proposition~2.8]{EngelArxiv}.
In the covering $(\CC^\times)^2$ of a nearby torus, the group
$S^1 \times S^1$ acts by rotating the two coordinates:
\begin{align}
 (e^{2\pi\sqrt{-1}\theta_1},e^{2\pi\sqrt{-1}\theta_2})\cdot(z,w)
 =(e^{2\pi\sqrt{-1}\theta_1}z,e^{2\pi\sqrt{-1}\theta_2}w).
 \label{rotate}
\end{align}
These rotations extend over the toric boundary and commute with the
period identifications, so they descend to the Mumford filling.
Their circle classes generate $K_\infty=\ZZ\delta\oplus\ZZ a$;
in particular, $\delta$ and $a$ determine circle actions on the filling.

For $i = 0,1$, we use the good reduction and logarithmic transformation construction of~\cite[Section~3]{EngelArxiv}, but allow higher multiples of the base-change
degrees. Let
\begin{align}
\label{divcon}
 m_0=3k_0,\qquad m_1=4\ell_0,
\end{align}
where $k_0,\ell_0$ are positive integers, choose 
\begin{align}
 a_i\in\tfrac1{m_i}\Lambda^{A_i},\qquad  v_i=m_i a_i=r_i\delta+n_i u_i,
 \label{logdat}
\end{align}
where $r_i,n_i\in\ZZ$. We refer to the $4$-tuple $(m_0,a_0,m_1,a_1)$ as the \textit{logarithmic transformation data}.
We will only consider logarithmic transformation data satisfying $\gcd(n_i,m_i)=1$; this condition will ensure that the cyclic actions below are free.

Let $d_0=3$ and $d_1=4$. For a local coordinate $q$ at $i$,
Engel's good-reduction model is a smooth torus family over the cover
$q=z^{d_i}$. Pull this family back by the further base change
$z=s^{m_i/d_i}$. Since smoothness is preserved under base change,
this gives a smooth torus family $\mathcal T_i\to\widetilde\Delta_i$
over the degree-$m_i$ cover $q=s^{m_i}$. Let $\zeta_i=e^{2\pi\sqrt{-1}/m_i}$.
Over $s\ne0$, the base-changed family consists of pairs $(p,s)$ with
$(q\circ f')(p)=s^{m_i}$. The cyclic symmetry
$(p,s)\mapsto(p,\zeta_i s)$ fixes $p$ and permutes its $m_i$
preimages under the covering. The degree-$d_i$ symmetry extends
to Engel's smooth model \cite[Corollary~3.3 and Construction~3.5]{EngelArxiv}.
Combining it with $s\mapsto\zeta_i s$ gives the extension to
$\mathcal T_i$, since $(\zeta_i s)^{m_i/d_i}=e^{2\pi\sqrt{-1}/d_i}s^{m_i/d_i}$.

Choose a local holomorphic section
$o_i:\widetilde\Delta_i\to\mathcal T_i$, defined on the whole covering
disc, and use $o_i(s)$ as the origin of $\mathcal T_{i,s}$.  
For $x \in \mathcal{T}_{i,s}$, the extended symmetry then has the form $(x,s)\mapsto(B_sx+b_s,\zeta_i s)$, where $B_s:\mathcal T_{i,s}\to\mathcal T_{i,\zeta_i s}$ is a group isomorphism and $b_s\in\mathcal T_{i,\zeta_i s}$.
Note that $B_s$ is defined on the whole disc. Consider 
\begin{align}
 \rho_i(x,s)=(B_sx,\zeta_i s).
 \label{cyclic}
\end{align}
Thus $\rho_i$ sends $o_i(s)$ to $o_i(\zeta_i s)$
and $\rho_i^{m_i}=1$.
Over $s\ne0$, let $e_i(s)$ denote the pullback of $\sigma$, viewed as a point of $\mathcal{T}_{i,s}$ with origin $o_i(s)$. Since $\sigma$ is a
section downstairs, the original cyclic symmetry sends $e_i(s)$ to
$e_i(\zeta_i s)$, and hence
\begin{align*}
 e_i(\zeta_i s)=B_se_i(s)+b_s.
\end{align*}
The translation $x\mapsto x-e_i(s)$ sends this pulled-back section to
$o_i(s)$ and conjugates the original punctured action to
\eqref{cyclic}, since
\begin{align*}
 (B_sx+b_s)-e_i(\zeta_i s)=B_s(x-e_i(s)).
\end{align*}

To define the translation used in the logarithmic transformation, write each fiber as
\begin{align}
 \mathcal T_{i,s}=E_s/\Lambda_s,\qquad
 E_s=T_{o_i(s)}\mathcal T_{i,s}\cong\CC^2,
 \label{tangent}
\end{align}
where $\Lambda_s$ is its period lattice in the tangent space at the
origin. The spaces $E_s$ form a holomorphic vector bundle; we also
write $B_s$ for the induced linear map $E_s\to E_{\zeta_i s}$.
Parallel transport along the chosen paths identifies $\Lambda$ with $\Lambda_s$; write $v_i(s)$
for the period corresponding to $v_i$, and
$a_i(s)=[v_i(s)/m_i]$ for the corresponding point of the torus.
Under these flat lattice identifications, $B_s$ acts on the period lattices by $A_i^{-1}$, as in~\cite[Lemma~3.6]{EngelArxiv}. In particular, $B_s v_i(s)=v_i(\zeta_i s)$, and $B_s$ preserves the covector $\psi$. Let $t_{a_i}$ be fiberwise translation by $a_i(s)$, using the origin $o_i(s)$, defined by
\begin{align}
t_{a_i}(x,s)=(x+a_i(s),s).
\end{align}
\begin{proposition}
\label{logprop}
For the data in \eqref{logdat}, with $\gcd(n_i,m_i)=1$, the cyclic
action generated by $\widetilde\rho_i=t_{a_i}\rho_i$ has order
$m_i$ and is free. Its quotient is smooth, with central fiber
$m_iF_i$, where $F_i$ is bielliptic. Over the punctured disc, using the identification $x\mapsto x-e_i(s)$ above, a biholomorphism from the original family
to this quotient is induced by
\begin{align}
 \Psi_i(x,s)=(x+\sigma_i(s),s), \quad
 \sigma_i(s)=\left[\frac{\log s}{2\pi\sqrt{-1}}v_i(s)\right]
                 \in E_s/\Lambda_s.
 \label{glue}
\end{align}
\end{proposition}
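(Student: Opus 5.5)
I will verify the three claims in order: the order of $\widetilde\rho_i$, freeness, and the gluing map. I will use only that $B_s$ is a fiberwise group isomorphism acting on periods by $A_i^{-1}$ and fixing $v_i$, since $v_i\in\Lambda^{A_i}$.

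\emph{Order.} First I compute iterates. Since $B_s$ is linear and $B_s a_i(s)=a_i(\zeta_i s)$ (because $B_sv_i(s)=v_i(\zeta_i s)$), induction gives
\begin{align*}
 \widetilde\rho_i^{\,j}(x,s)=(B^{(j)}_s x+j\,a_i(\zeta_i^j s),\ \zeta_i^j s),
\end{align*}
where $B^{(j)}_s$ is the composite of the $B$'s. For $j=m_i$ we have $B^{(m_i)}_s=\mathrm{id}$ because $\rho_i^{m_i}=1$, and $m_ia_i(s)=[v_i(s)]=0$ in $E_s/\Lambda_s$. Hence $\widetilde\rho_i^{\,m_i}=1$.

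\emph{Freeness.} For $0<j<m_i$ the base point $s$ is moved unless $s=0$, so it suffices to study the central fiber $\mathcal T_{i,0}$. There the map is $x\mapsto B_0^j x+j a_i(0)$, with $B_0$ of finite order acting on $H_1$ by $A_i^{-1}$. I apply the covector $\psi$, which is $B$-invariant and integral. It induces a homomorphism $\bar\psi:\mathcal T_{i,0}\to \RR/\ZZ$, namely $\mathrm{Re}$ of a suitable invariant functional. The cleanest way is to use the real-linear map $E_0\to\RR$ determined by $\psi$ on the period lattice. This is $B_0$-equivariant (trivially on the target) because $\psi\circ A_i=\psi$. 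A fixed point would then give $\bar\psi(x)=\bar\psi(x)+j\psi(v_i)/m_i$ mod $\ZZ$. Now $\psi(v_i)=\psi(r_i\delta+n_iu_i)=n_i$, since $\psi(u_i)=d^\vee(d+\dots)=1$ and $\psi(\delta)=0$. So a fixed point requires $jn_i\equiv0\pmod{m_i}$, which is impossible by $\gcd(n_i,m_i)=1$. The step I expect to be the main obstacle is justifying that a real-linear $\psi$-functional on $E_0$ exists and is $B_0$-invariant. I would handle it by averaging over $\langle B_0\rangle$: the real span of the lattice is all of $E_0$, and $\psi$ extends $\RR$-linearly.

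\emph{Quotient and central fiber.} A free finite action on a smooth space has a smooth quotient. Over $s\neq0$ the quotient is a smooth torus family over $q=s^{m_i}$. The central fiber is $\mathcal T_{i,0}/\langle\widetilde\rho_i\rangle$ with multiplicity $m_i$, since $q=s^{m_i}$ vanishes to order $m_i$ and the action is free. This quotient $F_i$ is a free quotient of a complex $2$-torus by a group acting through a nontrivial linear part, as $A_i\neq I$. The translation part is nonzero in the $\psi$-direction, so $F_i$ is bielliptic rather than a torus. Alternatively, the linear part fixes the elliptic curve direction corresponding to $\Lambda^{A_i}$ and acts with finite order on the quotient.

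\emph{Gluing.} On $s\ne0$ the function $\sigma_i$ is defined up to the choice of branch of $\log$. Changing the branch changes it by $v_i(s)\in\Lambda_s$, so $[\cdot]$ is well defined locally. Then I check the intertwining relation $\Psi_i\circ\rho_i=\widetilde\rho_i\circ\Psi_i$. Continuing $s\mapsto\zeta_is$ adds $1/m_i$ to $\log s/2\pi\sqrt{-1}$, so
\begin{align*}
 B_s\sigma_i(s)+a_i(\zeta_i s)=\sigma_i(\zeta_i s),
\end{align*}
again using $B_sv_i(s)=v_i(\zeta_i s)$. Thus $\Psi_i$ descends to a biholomorphism from the quotient by $\rho_i$ (the original punctured family, after the translation by $e_i(s)$) to the quotient by $\widetilde\rho_i$ over the punctured disc.
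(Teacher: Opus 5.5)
Your proposal is correct and follows the paper's proof essentially step for step: since $t_{a_i}$ commutes with $\rho_i$, you get $\widetilde\rho_i^{m_i}=t_{v_i}=1$; the map to $\RR/\ZZ$ induced by $\psi$ shifts by $n_i/m_i$, which gives freeness; the one-dimensional fixed direction together with a nontrivial linear part makes the quotient bielliptic; and branch-independence of $\sigma_i$ together with $\sigma_i(\zeta_i s)=B_s\sigma_i(s)+a_i(\zeta_i s)$ gives the descended biholomorphism. Two small remarks: the step you flagged as the main obstacle is not one, because $B_0$ is $\RR$-linear and $\psi\circ A_i^{-1}=\psi$ on the lattice, so the $\RR$-linear extension of $\psi$ to $E_0=\Lambda_0\otimes\RR$ is already $B_0$-invariant and no averaging is needed; and what keeps $F_i$ from being a torus is the nontrivial linear part (which kills the invariant holomorphic $2$-form), not the nonzero translation, whose role is to make the action free.
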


\begin{proof}
Since $v_i$ is invariant, $B_sv_i(s)=v_i(\zeta_i s)$, and
$t_{a_i}$ commutes with $\rho_i$. Thus
$\widetilde\rho_i^{m_i}=t_{v_i}=1$; the action on the base has
order $m_i$. Away from $s=0$, no nonidentity power fixes a point
because it moves $s$. On the central real torus
$(\Lambda\otimes\RR)/\Lambda$, the map to $\RR/\ZZ$ induced by
$\psi$ changes under $\widetilde\rho_i$ by $n_i/m_i$.
Since $\gcd(n_i,m_i) = 1$, no nonidentity power fixes any point on the central fiber.  
Therefore the quotient is smooth, and $q=s^{m_i}$ gives multiplicity
$m_i$ to its central fiber. The central fiber of $\mathcal T_i$
is an abelian surface, and the linear action has a one-dimensional
fixed complex subspace and a nontrivial eigenvalue of order three
or four. Its free quotient $F_i$ is therefore bielliptic by the same argument as in
\cite[Corollaries~3.2 and~3.9]{EngelArxiv} (although the precise bielliptic surfaces might be different; we do not need to identify them explicitly here). 

Changing the branch of $\log s$ adds an integral multiple of
$v_i(s)$, so \eqref{glue} defines a single-valued holomorphic
translation of the punctured torus family. Moreover,
\begin{align}
 \sigma_i(\zeta_i s)=B_s\sigma_i(s)+a_i(\zeta_i s), \quad
 \Psi_i\circ\rho_i=\widetilde\rho_i\circ\Psi_i.
 \label{descend}
\end{align}
Thus $\Psi_i$ carries orbits of $\rho_i$ to orbits of
$\widetilde\rho_i$. It descends to the claimed biholomorphism;
translation by $-\sigma_i$ induces its inverse.
\end{proof}

We use the biholomorphism in Proposition~\ref{logprop} to glue the quotient to $V$,
to obtain a threefold $X_{m_0, a_0, m_1, a_1}(u) \rightarrow \PP^1$. 
The chosen gluing identification depends on the actual vector $a_i$:
adding an invariant integral vector leaves $t_{a_i}$ unchanged but
can change \eqref{glue}.
For the later boundary calculations, normalize $|q|=1$ on $S_i^1$
and write $q=e^{2\pi\sqrt{-1}\theta}$,
$s=e^{2\pi\sqrt{-1}\theta/m_i}$. In flat real torus coordinates,
\eqref{glue} is then translation by $\theta a_i$.

\section{Fundamental group}
\label{fund}
In the following we will abbreviate $X_{m_0, a_0, m_1, a_1}(u)$ by $X$. 
Use the section $\sigma|_U$ and the
chosen paths to identify all fiber lattices with $\Lambda$, and
write $x_i$ for the lift $\sigma\circ\gamma_i$ of $\gamma_i$.
Note that $\langle x_0,x_1\rangle$ is free, $x_0x_1x_\infty=1$, and
$x_i\lambda x_i^{-1}=A_i\lambda$, for any $\lambda \in \Lambda$. 
Since $U$ retracts onto a wedge of two circles, $\pi_2(U)=0$.
The homotopy sequence and the section $\sigma|_U$ therefore give
\begin{align}
\label{semi}
 1\longrightarrow\Lambda\longrightarrow\pi_1(V)
 \longrightarrow\pi_1(U)\longrightarrow1,\qquad
 \pi_1(V)=\Lambda\rtimes\pi_1(U).
\end{align}
Similarly, the bundle $T^4\to M_i^5\to S_i^1$ and its restricted section
$\sigma|_{S_i^1}$ give
\begin{align}
 1\longrightarrow\Lambda\longrightarrow\pi_1(M_i^5)
 \longrightarrow\ZZ\longrightarrow1,\qquad
 \pi_1(M_i^5)=\Lambda\rtimes_{A_i}\ZZ.
\label{u8}
\end{align}
The generator of the last $\ZZ$ is $x_i$.
Thus the inclusion into the smooth part is explicitly
\begin{align}
\label{incl}
 \iota_i:\pi_1(M_i^5)\longrightarrow\pi_1(V),\qquad
 \iota_i(\lambda,k)=\lambda x_i^{k}.
\end{align}
The inclusion $M_i^5 \hookrightarrow V_i$ induces the homomorphism 
\begin{align}
 j_i:\pi_1(M_i^5)\longrightarrow\pi_1(V_i)\cong\pi_1(F_i).
\label{u10}
\end{align}
The open cover $X=V\cup V_0\cup V_1\cup V_\infty$ has connected
intersections $V\cap V_i$, and the $V_i$ are pairwise disjoint.
Van Kampen's theorem gives a presentation of $\pi_1(X)$ as follows.
Take the free product $\pi_1(V) * \pi_1(F_0) *  \pi_1(F_1) * \pi_1(F_{\infty})$,
and impose
\begin{align}
 \iota_i(h)=j_i(h),\qquad
 h\in\pi_1(M_i^5),\quad i=0,1,\infty;
\label{u11}
\end{align}
see \cite[Section~1.2]{Hatcher}.

For $i=0,1$, let $\mu_i\in\pi_1(M_i^5)$ be represented by the boundary of a small complex disc transverse to $F_i$, oriented counterclockwise in that disc and viewed as a loop in $M_i^5$. We call this the \textit{positive normal circle}.
\begin{proposition}
\label{bound}
The three maps $j_i$ are surjective for  $i = 0,1,\infty$.
At infinity, we have
\begin{align}
 \pi_1(F_\infty)=\Lambda/K_\infty,\qquad
 j_\infty(\lambda,k)=[\lambda].
 \label{jinf}
\end{align}
At $i=0,1$, write $g_i=j_i(x_i)$. Then
\begin{align}
 \pi_1(F_i)&=\langle\Lambda,g_i\mid[\Lambda,\Lambda]=1,\ 
 g_i\lambda g_i^{-1}=A_i\lambda,\ g_i^{m_i}=v_i\rangle,
 \label{pres}\\
 j_i(\lambda,k)&=\lambda g_i^k,\qquad
 \Ker(j_i)=\ZZ\langle\mu_i\rangle,\qquad
 \mu_i=(-v_i,m_i).
 \label{jfin}
\end{align}
\end{proposition}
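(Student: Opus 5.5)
We treat the two kinds of fibers separately, in each case computing the fundamental group of a concrete model of the neighborhood $V_i$.

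\textbf{The Mumford fiber.} For $i=\infty$ we use the description of $V_\infty$ from \S\ref{fillings}. Near $\infty$ the fibration is a quotient of a toric degeneration: $V_\infty$ is the quotient of the toric model over the disc by the period lattice, modulo the monodromy. The key input is the circle actions \eqref{rotate}, which extend over the filling and whose orbits represent $\delta$ and $a$.

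Since $V_\infty$ retracts onto $F_\infty$, every circle orbit through a point of the toric boundary is contractible there. Those circles degenerate to fixed points at the vertices of the $dP_6$ hexagon. Hence $K_\infty=\ZZ\delta\oplus\ZZ a$ dies in $\pi_1(F_\infty)$. The monodromy loop $x_\infty$ also bounds a disc, namely the section $\sigma$ over $\Delta_\infty$, so it dies too.

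Conversely, we compute $\pi_1(F_\infty)$ directly. The normalization $dP_6$ is simply connected, and $F_\infty$ is obtained from it by identifying opposite sides of the hexagon $D$, with $D$ a cycle of rational curves. A van Kampen or graph-of-spaces argument then gives a free abelian group of rank $2$, generated by the loops that cross the identified sides. Identifying these loops with the images of $d,b$ gives $\pi_1(F_\infty)=\Lambda/K_\infty$. The quotient is abelian because $\Lambda$ is abelian, and the formula for $j_\infty$ follows. Surjectivity is immediate.

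\textbf{The multiple fibers.} For $i=0,1$ we use Proposition~\ref{logprop}. The neighborhood $V_i$ is the free quotient $\mathcal T_i/\langle\widetilde\rho_i\rangle$. Here $\mathcal T_i$ is a smooth torus family over a disc, so $\pi_1(\mathcal T_i)=\Lambda$. The deck group is $\ZZ/m_i$, acting freely. Hence $\pi_1(V_i)$ is an extension of $\ZZ/m_i$ by $\Lambda$.

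Let $g_i$ be a lift of $\widetilde\rho_i$ to the universal cover $E\times\widetilde\Delta_i$, taken as the lift of $(x,s)\mapsto(B_sx+a_i(s),\zeta_i s)$ given by adding $v_i/m_i$ linearly. Then:
\begin{itemize}
\item conjugation by $g_i$ acts on $\Lambda$ by $B_s$;
\item since $v_i$ is $B_s$-invariant, $g_i^{m_i}$ is translation by $v_i$.
\end{itemize}
This gives the presentation \eqref{pres}. There is a sign issue: $B_s$ acts by $A_i^{-1}$ on lattices, while the presentation requires $A_i$. We reconcile this by checking that the loop $x_i$ on $M_i^5$ corresponds to $g_i^{-1}$ or $g_i$ under the identification $\Psi_i$, according to the orientation convention $s\mapsto\zeta_i s$. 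This choice also fixes the sign in $\mu_i$.

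\textbf{Computing $j_i$ and its kernel.} Next compute $j_i$ using the gluing \eqref{glue}. On $M_i^5$, with $q=e^{2\pi\sqrt{-1}\theta}$, the identification is translation by $\theta a_i$. So the section loop $x_i$ (constant in original coordinates) maps to a path in the quotient that moves $s$ by $\zeta_i$ and translates by $a_i$. That path is exactly $g_i$, so $j_i(\lambda,k)=\lambda g_i^k$. This map is surjective because $\Lambda$ and $g_i$ generate $\pi_1(F_i)$.

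For the kernel, compare orders. The group $\pi_1(M_i^5)=\Lambda\rtimes\ZZ$ maps onto a group in which $\Lambda$ injects, so $\Ker(j_i)$ meets $\Lambda$ trivially. The image of $\Ker(j_i)$ in $\ZZ$ is $m_i\ZZ$. It follows that $\Ker(j_i)$ is infinite cyclic, generated by the unique element of the form $(\lambda,m_i)$ in it, namely $(-v_i,m_i)$.

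Finally, identify this generator with $\mu_i$. A small transverse disc to $F_i$ lifts to the disc $\{x=\text{const}\}$ in $\mathcal T_i$ over $\widetilde\Delta_i$. Its boundary wraps $m_i$ times around $q$ and, through \eqref{glue}, picks up translation by $-v_i$ relative to the section $\sigma$.

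\textbf{Main obstacle.} The main obstacle is $\pi_1(F_\infty)$, in particular showing that nothing beyond $K_\infty$ and $x_\infty$ is killed. The plan for this is to exhibit $F_\infty$ as a $T^2$-quotient over a 2-torus, so that the projection to $\Lambda/K_\infty$ is visibly an isomorphism on $\pi_1$.
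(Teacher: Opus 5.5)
\textbf{The gap at $i=\infty$.} This is the step you yourself flag as the main obstacle, and it is not closed. You do establish two things. First, $j_\infty$ kills $x_\infty$ and $K_\infty$, so it factors through $\Lambda/K_\infty\cong\ZZ^2$. Second, your van Kampen computation gives $\pi_1(F_\infty)\cong\ZZ^2$ abstractly. Together these only produce \emph{some} homomorphism $\ZZ^2\to\ZZ^2$, which could have finite-index image or a kernel. The sentence ``identifying these loops with the images of $d,b$'' is exactly the claim to be proved, and ``surjectivity is immediate'' presupposes it. Your $T^2$-quotient plan has the same hole: you would still need the induced map from the nearby-fiber quotient torus to $F_\infty/T^2$ to have degree one.

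The paper closes this with Mumford's construction itself. Before the period quotient, the central fiber is a union of simply connected toric surfaces with simply connected intersections and dual complex a plane, so it is simply connected. Hence $V_\infty$ is the quotient of a simply connected space by a free action of $\Lambda/K_\infty$. On a nearby fiber this covering restricts to $(\CC^\times)^2\to T^4$. This gives both $\pi_1(F_\infty)=\Lambda/K_\infty$ and $j_\infty|_\Lambda$ equal to the projection in one step.

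Alternatively, you can keep your computation and add surjectivity of $j_\infty$ by general position. Loops in the $6$-manifold $N_\infty$ can be pushed off the real $4$-dimensional $F_\infty$, and $N_\infty\setminus F_\infty$ retracts onto $M_\infty^5$. Then $\Lambda/K_\infty\to\pi_1(F_\infty)\cong\ZZ^2$ is a surjection of $\ZZ^2$ onto itself, hence an isomorphism.

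\textbf{The case $i=0,1$.} Here your route differs from the paper's but is sound once one sign point is fixed. You read off \eqref{pres} from the deck group of the free $\ZZ/m_i$-quotient of $\mathcal T_i$, whose universal cover is contractible. You then get $\Ker(j_i)$ algebraically from $\Lambda\hookrightarrow\pi_1(F_i)$. The paper instead uses an equivariant smooth trivialization to realize $M_i^5$ as the normal circle bundle of $F_i$. It then uses $\pi_2(F_i)=0$ to see that $\Ker(j_i)$ is generated by the normal circle, and obtains \eqref{pres} as $\pi_1(M_i^5)$ modulo $\mu_i$. Your version avoids the equivariant trivialization; the paper's identifies the kernel geometrically without first knowing that $\Lambda$ injects.

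\textbf{The sign issue.} Your proposed reconciliation is wrong in one of its branches. Your own lift of the section loop ends at $g_i$ applied to the initial point, so $x_i$ corresponds to $g_i$. Choosing $g_i^{-1}$ instead would give $x_i^{m_i}=-v_i$ and $\mu_i=(v_i,m_i)$, contradicting the statement.

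The mismatch between $B_s=A_i^{-1}$ and $A_i$ is resolved by the paper's convention that the rightmost path is traversed first. Under that convention, the identification of $\pi_1$ with the deck group reverses products. Write $t_\lambda$ for translation by $\lambda$. The relation $g_i^{-1}\circ t_\lambda\circ g_i=t_{A_i\lambda}$ among deck transformations then reads $g_i\lambda g_i^{-1}=A_i\lambda$ in the paper's notation, while $g_i^{m_i}=v_i$ is unchanged.
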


\begin{proof}
Before the period-lattice quotient, the Mumford central fiber is a
union of simply connected toric surfaces with simply connected
intersections and dual complex a triangulated plane. Van Kampen makes
this union simply connected. The group of covering transformations is $\Lambda/K_\infty$:
the loops in $K_\infty$ become contractible in the toric surfaces.
The same covering extends over the filling, so the map from a nearby
fiber induces $\lambda\mapsto[\lambda]$ in \eqref{jinf}.
The loop $\sigma(S_\infty^1)$ bounds the section disc
$\sigma(D_\infty)$. The loop $x_\infty$ is obtained from this boundary
loop using the chosen path to the base point. Hence $j_\infty(x_\infty)=1$.
Together these two observations give
$j_\infty(\lambda,k)=[\lambda]$ for every $(\lambda,k)$.

For $i = 0$ or $i = 1$, we can choose a smooth trivialization of $\mathcal T_i$ over
the covering disc which respects the finite cyclic action. Such a trivialization is obtained by averaging a connection over the finite cyclic group and using parallel transport along radial paths. Its quotient identifies $N_i$ with a disc bundle in the complex normal
line bundle of $F_i$, and $M_i^5$ with its boundary circle bundle.
Since $F_i$ has a torus cover, $\pi_2(F_i)=0$. The homotopy sequence
of this circle bundle therefore gives a surjection
$\pi_1(M_i^5)\to\pi_1(F_i)$ with kernel an infinite cyclic group
generated by the positive normal circle.

To express the positive normal circle using our generators in $V$, choose a
transverse coordinate $w$ with $q=w^{m_i}$. One positive turn of
$w$ makes $m_i$ turns in the base, while a lift of $\sigma_i$ in
\eqref{glue} to $E_s$ increases by $v_i$.
The inverse gluing map subtracts $\sigma_i$, so the fiber displacement
in the coordinates of $V$ is $-v_i$.
The positive normal circle therefore represents $(-v_i,m_i)=v_i^{-1}x_i^{m_i}$.
Quotienting by this kernel imposes $x_i^{m_i}=v_i$, proving
\eqref{pres}--\eqref{jfin}.
\end{proof}

\begin{theorem}
\label{simply}
The logarithmic transformation data satisfying
\begin{align}
 \left|m_1n_0+m_0n_1\right|
 =\left|m_0m_1\psi(a_0+a_1)\right|=1
 \label{bezout}
\end{align}
give a simply connected $X$, where we also write $\psi$ for its $\QQ$-linear extension to $\Lambda\otimes\QQ$. Such data exist precisely when
$\gcd(m_0,m_1)=1$. In particular, every relatively prime pair
$3k_0,4\ell_0$ admits such choices.
\end{theorem}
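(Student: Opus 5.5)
The plan is to compute $\pi_1(X)$ from the Van Kampen presentation \eqref{u11} and show that it collapses to an abelian group on two generators. That group is presented by a $2\times 2$ integer matrix whose determinant is $\pm(m_1n_0+m_0n_1)$.

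\emph{Reduction to a quotient of $\pi_1(V)$.} Since each $j_i$ is surjective (Proposition~\ref{bound}), every generator coming from $\pi_1(F_i)$ is identified with an element of $\pi_1(V)$. Hence $\pi_1(X)$ is the quotient of $\pi_1(V)=\Lambda\rtimes\langle x_0,x_1\rangle$ by the normal closure of the images $\iota_i(\Ker j_i)$. Proposition~\ref{bound} identifies these kernels. At infinity, $\Ker j_\infty$ consists of the pairs $(\lambda,k)$ with $\lambda\in K_\infty$, so we impose $\delta=a=1$ and $x_\infty=1$. The last relation gives $x_1=x_0^{-1}$. At $i=0,1$ we impose $x_i^{m_i}=v_i$. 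Thus
\begin{align*}
\pi_1(X)=\bigl(\Lambda\rtimes\langle x_0,x_1\rangle\bigr)\big/\langle\!\langle \delta,\ a,\ x_0x_1,\ x_0^{m_0}v_0^{-1},\ x_1^{m_1}v_1^{-1}\rangle\!\rangle .
\end{align*}

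\emph{Collapsing the lattice.} Conjugation by $x_1$ acts on $\Lambda$ by $A_1$, and $A_1a=-b$, so $b$ also dies. The normal closure of $\delta,a$ in $\Lambda$ under the monodromy group therefore contains $\ZZ\delta+\ZZ a+\ZZ b=\Ker\psi$. Conversely, $\Ker\psi$ is monodromy invariant because $\psi$ is an invariant covector by \eqref{invar1}. So exactly $\Ker\psi$ is killed, and the image of $\Lambda$ is $\Lambda/\Ker\psi\cong\ZZ$, generated by the class of $d$.

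\emph{The resulting abelian group.} Write $g=x_0$, so that $x_1=g^{-1}$. Since $\psi$ is $A_i$-invariant, $g$ commutes with $d$ in the quotient, and the group is abelian on the generators $g,d$. We have $\psi(u_i)=1$ and $\psi(\delta)=0$, so $v_i\equiv n_i d$. The remaining relations are therefore $g^{m_0}=d^{n_0}$ and $g^{-m_1}=d^{n_1}$. This gives the presentation matrix
\begin{align*}
\begin{pmatrix} m_0 & -n_0\\ -m_1 & -n_1\end{pmatrix},
\end{align*}
whose determinant is $-(m_0n_1+m_1n_0)$. The group is trivial if and only if $|m_1n_0+m_0n_1|=1$. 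Since $\psi(a_i)=n_i/m_i$, this quantity equals $|m_0m_1\psi(a_0+a_1)|$, which is \eqref{bezout}.

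\emph{Existence.} If \eqref{bezout} holds, then any common divisor of $m_0$ and $m_1$ divides $m_1n_0+m_0n_1=\pm1$, so $\gcd(m_0,m_1)=1$. Conversely, if $\gcd(m_0,m_1)=1$, Bézout gives integers $n_0,n_1$ with $m_1n_0+m_0n_1=1$. Reducing modulo $m_0$ gives $m_1n_0\equiv 1$, hence $\gcd(n_0,m_0)=1$; symmetrically $\gcd(n_1,m_1)=1$, so the freeness hypothesis of Proposition~\ref{logprop} holds. We then take any $r_i$ and set $a_i=(r_i\delta+n_iu_i)/m_i$. For $m_0=3k_0$ and $m_1=4\ell_0$ relatively prime, this yields admissible data.

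\emph{Expected main obstacle.} The delicate point is the bookkeeping in the first step. We must confirm that $\Ker j_\infty$ is normally generated by $K_\infty$ together with $x_\infty$. We must also check that the orientation and sign conventions in $\mu_i=(-v_i,m_i)$ and in $x_0x_1x_\infty=1$ give the relations $g^{m_0}=v_0$ and $g^{-m_1}=v_1$ with exactly these signs. The conclusion $|\det|=1$ is insensitive to an overall sign, but a sign error on a single relation would turn $m_1n_0+m_0n_1$ into $m_1n_0-m_0n_1$. I would check this carefully against the orientation conventions fixed before Lemma~\ref{mono}.
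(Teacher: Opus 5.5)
Your proposal is correct and follows essentially the same route as the paper. You use surjectivity of the $j_i$ to reduce to a quotient of $\pi_1(V)$, impose $\delta=a=x_\infty=1$ and $x_i^{m_i}=v_i$, collapse the lattice to the class of $d$, read off the $2\times 2$ determinant $\pm(m_1n_0+m_0n_1)$, and settle existence and admissibility by B\'ezout. Identifying the killed sublattice as exactly $\Ker\psi$ via invariance of $\psi$ is just a cleaner packaging of the paper's step-by-step reduction (which gets $b=1$ from $x_1ax_1^{-1}=b^{-1}$ and then checks the remaining relations reduce to $cx=xc$), and the signs you were worried about agree with the paper's.
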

\begin{proof}
We use multiplicative notation for the fiber group $\Lambda$, whose generators $\delta,a,b,d$ commute pairwise. The base group has the presentation $\pi_1(U)=\langle x_0,x_1,x_\infty\mid x_0x_1x_\infty=1\rangle$. We use the same symbols for their lifts via $\sigma$ to
$\pi_1(V)$. An element of $\pi_1(V)$ can be written as $\lambda w$, where $\lambda\in\Lambda$ and $w$ is a word in $x_0,x_1$ and their inverses; the third generator is $x_\infty=(x_0x_1)^{-1}$. The semidirect product in \eqref{semi} means that conjugation by $x_i$ acts on $\Lambda$ by $A_i$. Consequently, \eqref{amat} gives the following presentation of $\pi_1(V)$: the generators are $\delta,a,b,d,x_0,x_1,x_\infty$, the generators $\delta,a,b,d$ commute pairwise, and the remaining defining relations are
\begin{align}
\label{vpres1}
x_0x_1x_\infty&=1, &  x_i\delta x_i^{-1} &=\delta\quad(i=0,1,\infty), &  & \\
\label{vpres2}
x_0ax_0^{-1}&=\delta^{-1}a^{-1}b, &  x_0bx_0^{-1}&=a^{-1}, & x_0dx_0^{-1}&=\delta^2a^6d, \\
\label{vpres3}
x_1ax_1^{-1}&=b^{-1}, & x_1bx_1^{-1}&=\delta a, &  x_1dx_1^{-1}& =\delta^{-3}b^6d, \\
x_\infty ax_\infty^{-1}&=a, & x_\infty bx_\infty^{-1}&=ab, &  x_\infty dx_\infty^{-1}&=\delta d.
\label{vpres4}
\end{align}
To obtain $\pi_1(X)$, we start with the free product $\pi_1(V) * \pi_1(F_0) * \pi_1(F_1)*\pi_1(F_\infty)$ and impose the Van Kampen relations \eqref{u11}: for each $i$, we identify $\iota_i(h)$ with $j_i(h)$ for every $h\in\pi_1(M_i^5)$. Proposition~\ref{bound} states that each $j_i$ is surjective, so these identifications allow every generator from a filling group $\pi_1(F_i)$ to be replaced by an element coming from $\pi_1(V)$. We now carry out these replacements. We use the same symbols for the generators of $\pi_1(V)$ and their images in $\pi_1(X)$.

At infinity, the Van Kampen relation \eqref{u11} identifies $\iota_\infty(\lambda,k)$ with $j_\infty(\lambda,k)$. By \eqref{incl}, the first is $\lambda x_\infty^k\in\pi_1(V)$, and by \eqref{jinf}, the second is $[\lambda]\in\Lambda/K_\infty=\pi_1(F_\infty)$. Taking $k=0$ therefore identifies every class $[\lambda]$ in the infinity factor with the image of $\lambda$ in $\pi_1(X)$. Since $K_\infty=\ZZ\delta\oplus\ZZ a$ by \eqref{fixlat}, the classes of $\delta$ and $a$ in $\pi_1(F_\infty)$ are the identity. Their images in $\pi_1(X)$ consequently satisfy $\delta=a=1$. Taking $\lambda=0$ and $k=1$ also gives $x_\infty=1$ in $\pi_1(X)$. We can therefore remove the infinity factor from the presentation by adding the relations $\delta=a=x_\infty=1$ to the presentation of $\pi_1(V)$.

For $i=0,1$, we again apply the Van Kampen relation \eqref{u11}, using the formulas for $\iota_i$ and $j_i$ in \eqref{incl} and \eqref{jfin}. Taking $h=(\lambda,0)$ identifies the lattice element $\lambda$ in the presentation \eqref{pres} of $\pi_1(F_i)$ with the corresponding element of $\pi_1(V)$. Taking $h=(0,1)$ identifies $g_i$ with $x_i$. We may thus replace the lattice generators and $g_i$ in \eqref{pres} by the corresponding generators of $\pi_1(V)$. After these replacements, the commutativity and conjugation relations in \eqref{pres} are already among the defining relations of $\pi_1(V)$ listed above. The remaining relation $g_i^{m_i}=v_i$ becomes $x_i^{m_i}=v_i$ in $\pi_1(X)$. Thus $\pi_1(X)$ has the generators and relations of $\pi_1(V)$, together with
\begin{align}
\delta=a=x_\infty=1,\qquad x_0^{m_0}=v_0,\qquad x_1^{m_1}=v_1.
\label{fillrel}
\end{align}

We next simplify the above presentation. Since $a=1$, the relation $x_1ax_1^{-1}=b^{-1}$ in \eqref{vpres3} gives $b=1$ in $\pi_1(X)$. Also, $x_\infty=1$ and $x_0x_1x_\infty=1$ give $x_1=x_0^{-1}$. Thus $\pi_1(X)$ is generated by the images of $d$ and $x_0$, which we denote by $c$ and $x$, respectively. After these substitutions, the relations in \eqref{vpres1}-\eqref{vpres4} reduce to $cx=xc$. 

We now use additive notation. By \eqref{uvect}, both $u_0$ and $u_1$ map to $c$, so \eqref{logdat} shows that $v_i$ maps to $n_i c$. Since $x_0$ maps to $x$ and $x_1$ maps to $-x$, the two power relations in \eqref{fillrel} become
\begin{align}
-n_0c+m_0x=0,\qquad -n_1c-m_1x=0.
\label{relat}
\end{align}
The coefficient matrix of the relations in \eqref{relat} has determinant $m_1n_0+m_0n_1=\pm1$ by \eqref{bezout}. Thus these relations give $c=x=0$, so $\pi_1(X)=1$.

Finally, if $\gcd(m_0,m_1)=1$, there exist integers $n_0,n_1$ satisfying $m_1n_0+m_0n_1=1$. This equation also implies $\gcd(n_i,m_i)=1$ for both indices. Choosing arbitrary integers $r_i$ and setting $a_i=(r_i\delta+n_i u_i)/m_i$ therefore gives admissible data in \eqref{logdat} satisfying \eqref{bezout}. Conversely, every common divisor of $m_0,m_1$ divides $m_1n_0+m_0n_1$, so \eqref{bezout} cannot hold unless $\gcd(m_0,m_1)=1$.

\end{proof}

\section{Integral homology}
\label{hom}
Assume \eqref{bezout}, so $\pi_1(X)=H_1(X;\ZZ)=0$.
All homology and cohomology groups below have integral coefficients.

\subsection{The Mayer--Vietoris reduction}
Since the $V_i$ are disjoint, apply the ordinary Mayer--Vietoris
sequence to $V$ and $V_0\sqcup V_1\sqcup V_\infty$:
\begin{equation}
\begin{tikzcd}[column sep=2.2em,row sep=0em]
\textstyle\bigoplus\limits_i H_2(M_i^5)
 \arrow[r,"\Phi_2"] &
H_2(V)\oplus\textstyle\bigoplus\limits_i H_2(F_i)
 \arrow[r] & H_2(X)
 \arrow[dll,to path={
 (\tikztostart.east)
 .. controls +(2em,0) and +(2em,0) ..
 ($(\tikztostart.east)!0.5!(\tikztostart.east |- \tikztotarget.west)$)
 --
 ($(\tikztotarget.west)!0.5!(\tikztotarget.west |- \tikztostart.east)$)
 .. controls +(-2em,0) and +(-2em,0) ..
 (\tikztotarget.west)\tikztonodes}] \\
\textstyle\bigoplus\limits_i H_1(M_i^5)
 \arrow[r,"\Phi_1"] &
H_1(V)\oplus\textstyle\bigoplus\limits_i H_1(F_i)
 \arrow[r] & H_1(X)=0.
\end{tikzcd}
\label{mvseq}
\end{equation}
The maps are $\Phi_j((z_i))=(\sum_i(\iota_i)_*z_i,-((j_i)_*z_i)_i)$.
For the compact six-manifolds $N_i$ with boundaries $M_i^5$ in \eqref{opens}, define
$\tau_i$ to be the following composition:
\begin{align}
 \tau_i:H^3(F_i)\cong H_3(N_i,M_i^5)
 \xrightarrow{\partial_i}H_2(M_i^5)
 \xrightarrow{(\iota_i)_*}H_2(V),
 \label{taudef}
\end{align}
using the retraction and Poincar\'e--Lefschetz duality.
For $i=0,1$, we choose the sign of this duality isomorphism
so that, under the Poincar\'e duality identification
$H^3(F_i)\cong H_1(F_i)$, the boundary associated to an oriented loop
is the torus formed by its positive normal circles, oriented by the
loop first and the circle second.
\begin{proposition}
\label{mvred}
There is an isomorphism
\begin{align}
 H_2(X)\cong\Coker\left(\textstyle\bigoplus\limits_iH^3(F_i)
             \xrightarrow{\tau_0+ \tau_1+ \tau_{\infty}}H_2(V)\right).
 \label{coker}
\end{align}
\end{proposition}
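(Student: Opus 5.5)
The plan is to replace the Mayer--Vietoris sequence \eqref{mvseq} by the long exact sequence of the pair $(X,V)$, combined with excision and Poincar\'e--Lefschetz duality. Since the $N_i$ are disjoint, excision gives $H_k(X,V)\cong\bigoplus_i H_k(N_i,M_i^5)$. Each $N_i$ is a compact oriented six-manifold with boundary that retracts to $F_i$, so $H_k(N_i,M_i^5)\cong H^{6-k}(F_i)$. The pair sequence then reads
\begin{align*}
 \textstyle\bigoplus_i H^3(F_i)\xrightarrow{\ \partial\ } H_2(V)\longrightarrow H_2(X)\longrightarrow \textstyle\bigoplus_i H^4(F_i)\xrightarrow{\ \partial\ } H_1(V)\longrightarrow H_1(X)=0.
\end{align*}
By naturality of the boundary map with respect to the excision, the first arrow is the composite $\partial_i$ followed by $(\iota_i)_*$, that is, exactly $\tau_0+\tau_1+\tau_\infty$ from \eqref{taudef}. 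The sign conventions only change generators and do not affect cokernels. So \eqref{coker} follows once I show that the connecting map $\bigoplus_i H^4(F_i)\to H_1(V)$ is injective.

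Next I compute that map. For $i=0,1$, $F_i$ is a bielliptic surface, hence a closed oriented $4$-manifold, so $H^4(F_i)\cong\ZZ$, generated by the dual of a point. Under Lefschetz duality this corresponds to a transverse normal disc, whose boundary is the positive normal circle. By Proposition~\ref{bound}, its image in $H_1(V)$ is the abelianization of $\mu_i=(-v_i,m_i)$, namely $-v_i+m_ix_i$.

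For $i=\infty$, the normalization of $F_\infty$ is the single irreducible surface $dP_6$, and $F_\infty$ is a $4$-dimensional complex. I expect $H^4(F_\infty)\cong\ZZ$, generated by the class dual to a transverse disc through a smooth point. A transverse disc is furnished by the section disc $\sigma(D_\infty)$, whose boundary loop gives $x_\infty$ in $H_1(V)$. Justifying $H^4(F_\infty)\cong\ZZ$ (top cohomology of a complex whose normalization is irreducible, with gluing only along lower-dimensional strata) and identifying the generator with this disc is the step I expect to need the most care.

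Finally, I check injectivity using the structure of $H_1(V)$. By \eqref{semi}, $H_1(V)=\Lambda_{\mathrm{coinv}}\oplus\ZZ x_0\oplus\ZZ x_1$, with $x_\infty=-x_0-x_1$. Suppose a combination $\alpha_0(-v_0+m_0x_0)+\alpha_1(-v_1+m_1x_1)+\alpha_\infty x_\infty$ vanishes. The $x$-components force $\alpha_0m_0=\alpha_\infty=\alpha_1m_1$. Since $\gcd(m_0,m_1)=1$, this gives $(\alpha_0,\alpha_1,\alpha_\infty)=t(m_1,m_0,m_0m_1)$ for some integer $t$. The invariant covector $\psi$ kills $(A_i-I)\Lambda$, so it descends to $\Lambda_{\mathrm{coinv}}\to\ZZ$. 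Since $\psi(v_i)=n_i$, applying $\psi$ to the $\Lambda$-component gives $-t(m_1n_0+m_0n_1)=\mp t$ by \eqref{bezout}. Hence $t=0$, the connecting map is injective, and the isomorphism \eqref{coker} follows.
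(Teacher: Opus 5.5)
Your proposal is correct, and it takes a genuinely different route from the paper.

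The paper argues in two stages:
\begin{enumerate}
\item It runs Mayer--Vietoris for $V$ and $V_0\sqcup V_1\sqcup V_\infty$. It shows $\Phi_1$ is injective by a rank count, using $H_1(M_i^5)\cong\ZZ^3$, $b_1(F_i)=2$ and $H_1(X)=0$. This gives $H_2(X)=\Coker(\Phi_2)$, and freeness of $H_1(F_i)$ as a by-product.
\item It then uses the principal divisor $m_iF_i$ to show that each local map $H_2(N_i)\to H_2(N_i,M_i^5)\cong\ZZ$ vanishes. Hence $H_2(M_i^5)\to H_2(F_i)$ is onto, and the filling summands can be discarded.
\end{enumerate}

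You instead use the exact sequence of the pair $(X,V)$ with excision and Lefschetz duality. There the cokernel of $\tau_0+\tau_1+\tau_\infty$ appears immediately, and the only remaining content is injectivity of $\bigoplus_iH^4(F_i)\to H_1(V)$. Your images $-v_i+m_ix_i$ and $x_\infty$ agree with Proposition~\ref{bound} and its proof. Your $\psi$-computation with \eqref{bezout} then correctly shows they are independent.

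Comparing the two:
\begin{itemize}
\item Your route avoids the principal-divisor argument and the computation of $H_1(M_i^5)$, and it shows exactly where \eqref{bezout} enters.
\item The paper's route yields freeness of $H_1(F_i)$, which is used later. That freeness also follows directly from \eqref{hzero}--\eqref{hone}, since the relation vector is primitive when $\gcd(n_i,m_i)=1$.
\end{itemize}

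You can shortcut the step you flagged. Because $H_1(X)=0$, exactness makes $\ZZ^3\cong\bigoplus_iH^4(F_i)\to H_1(V)\cong\ZZ^3$ surjective, hence an isomorphism. So you need $H^4(F_\infty)\cong\ZZ$, which the paper also uses via \cite{HVSixSpherePublished}, but not the identification of its generator with the section disc.
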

\begin{proof}
For automorphisms $A_j$ of a lattice $L$, the quotient
$L/\sum_j(A_j-I)L$ is called the group of coinvariants. When the
$A_j$ generate the monodromy of a local system, this quotient
identifies with its degree-zero homology; see
\cite[Proposition~5.14(1)]{DavisKirk}.

Abelianizing \eqref{semi} and using \eqref{amat} gives
$H_1(V)=\ZZ^3$ and $H_1(M_i^5)=\ZZ^3$.
Indeed, the individual coinvariants of $A_0,A_1$ have respectively
$b=-a,\delta=-3a$ and $b=-a,\delta=-2a$, leaving $a,d$ free.
Their common coinvariants are $\ZZ[d]$.
Each $F_i$ has first Betti number two. Since $H_1(X)=0$,
$\Phi_1$ is a surjection from $\ZZ^9$ to a rank-nine group;
its kernel, a rank-zero subgroup of $\ZZ^9$, is zero.
Thus $H_2(X)=\Coker(\Phi_2)$, and each $H_1(F_i)$ is free.

Next, $H^4(F_i)=\ZZ$, evaluated on $[F_i]$. For $i = 0,1$ this is obvious since $F_i$ is a manifold. For $i=\infty$, use the standard top-degree
calculation for an irreducible compact complex surface; see the discussion in \cite[p.~30]{HVSixSpherePublished}. The pair sequence and duality give
\begin{align}
 H_2(M_i^5)\longrightarrow H_2(N_i)
 \longrightarrow H_2(N_i,M_i^5)\cong H^4(F_i) \cong \ZZ.
 \label{pair}
\end{align}
The image of $z\in H_2(N_i)$ evaluates on $[F_i]$ as the ambient
intersection $z\cdot F_i$.
The divisor $m_iF_i$ is principal, where $m_\infty=1$; hence
$m_i(z\cdot F_i)=0$, so the middle map is zero. Thus
$H_2(M_i^5)\to H_2(F_i)$ is surjective for every $i$.

Consequently every class in $\Coker(\Phi_2)$ has a representative in
$H_2(V)$ alone. Its relations are precisely the images of
$\Ker(H_2(M_i^5)\to H_2(F_i))$. The pair sequence identifies these
kernels with $\Image\partial_i$, proving \eqref{coker}.
\end{proof}

\subsection{Homology of the smooth part}
We next compute the second integral homology of the smooth part $V$. 
\begin{lemma}
\label{vbasis}
We have $H_2(V)\cong\ZZ^6$.
\end{lemma}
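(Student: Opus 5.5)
The plan is to compute $H_2(V)$ from the Leray--Serre spectral sequence of the torus bundle $f':V\to U$. Since $U$ deformation retracts onto a wedge of two circles, the base has cohomological dimension one. The spectral sequence therefore has only the columns $p=0,1$ and degenerates at $E^2$. This gives a short exact sequence
\begin{align*}
 0\longrightarrow H_0\bigl(U;\textstyle\bigwedge^2\Lambda\bigr)\longrightarrow H_2(V)\longrightarrow H_1(U;\Lambda)\longrightarrow 0,
\end{align*}
where $H_j(T^4)=\bigwedge^j\Lambda$ carries the monodromy induced by $A_0,A_1$. If both outer groups are free abelian, the sequence splits. Hence it suffices to show that they are free of ranks $1$ and $5$.

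\textbf{The group $H_1(U;\Lambda)$.} Use the standard free resolution for the free group $\pi_1(U)=\langle x_0,x_1\rangle$. The twisted chain complex is
\begin{align*}
 \Lambda\oplus\Lambda\xrightarrow{\ (A_0-I,\ A_1-I)\ }\Lambda,
\end{align*}
and $H_1(U;\Lambda)$ is the kernel of this map. A kernel of a map between free modules is a subgroup of a lattice, so it is automatically free. By the proof of Proposition~\ref{mvred}, the cokernel is the group of coinvariants $\ZZ[d]$, which has rank one. Hence the image has rank $3$, and the kernel has rank $8-3=5$.

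\textbf{The group $H_0(U;\bigwedge^2\Lambda)$.} This is the coinvariant quotient
\begin{align*}
 \textstyle\bigwedge^2\Lambda\Big/\Bigl((\textstyle\bigwedge^2A_0-I)\textstyle\bigwedge^2\Lambda+(\textstyle\bigwedge^2A_1-I)\textstyle\bigwedge^2\Lambda\Bigr).
\end{align*}
Its rank equals the rank of the invariants of the dual representation. By \eqref{invar2} those invariants are $\ZZ\xi$, so the rank is $1$. To show that the quotient is exactly $\ZZ$, I use the invariant form $\xi=a^\vee\wedge b^\vee+6d^\vee\wedge\delta^\vee$. Pairing with $\xi$ descends to a map from the coinvariants to $\ZZ$. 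This map is surjective, since $\xi(a\wedge b)=1$. It remains to show that its kernel is exactly the span of the images of $\bigwedge^2A_i-I$.

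\textbf{The main obstacle.} This last point, the torsion-freeness of the coinvariants, is the one real computation. I would apply $\bigwedge^2A_0-I$ and $\bigwedge^2A_1-I$ from \eqref{amat} to the six basis bivectors $\delta\wedge a,\dots,b\wedge d$. I would then check by row reduction over $\ZZ$ that the resulting vectors span the full rank-$5$ lattice $\Ker\xi\subset\bigwedge^2\Lambda$, not merely a finite-index sublattice.

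A cheaper route is to use only the images of a few basis bivectors. These should already reduce every bivector modulo the relations to a multiple of $a\wedge b$. Then $\xi$ shows that no multiple of $a\wedge b$ dies, which rules out torsion.

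Combining the two pieces, $H_2(V)\cong\ZZ\oplus\ZZ^5=\ZZ^6$.
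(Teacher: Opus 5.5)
Your proposal is correct and is essentially the paper's proof: the same degenerate Serre spectral sequence giving $0\to H_0(U;\bigwedge^2\Lambda)\to H_2(V)\to H_1(U;\Lambda)\to 0$, the same rank-five count for $\Ker(\mathcal D_1)$ from the coinvariants $\ZZ[d]$, and your ``cheaper route'' (reduce every bivector to a multiple of $a\wedge b$, then let $\xi$ rule out torsion) is exactly how the paper obtains $H_0(U;\bigwedge^2\Lambda)=\ZZ[a\wedge b]$. The one computation you only announced does go through. The relations $(\bigwedge^2A_0-I)s_4=s_1$ and $(\bigwedge^2A_1-I)s_4=s_2$ kill $\delta\wedge a$ and $\delta\wedge b$. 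The images of $a\wedge d$ and $b\wedge d$ then give $s_5+s_6=0$, $s_3+6s_4+2s_5-s_6=0$ and $s_3+6s_4+s_5-s_6=0$, hence $s_5=s_6=0$ and $s_3=-6s_4$.
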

\begin{proof}
In local-coefficient homology on $U$, we use $\Lambda$ also to denote
the local system with fiber $H_1(X(u)_t;\ZZ)$ and monodromies $A_i$.
The homology Serre spectral sequence of $T^4\to V\to U$ has
\begin{align}
 E^2_{p,q}=H_p(U;\textstyle\bigwedge^q\Lambda)\Longrightarrow H_{p+q}(V).
\label{serre}
\end{align}
Here $H_q(T^4;\ZZ)=\bigwedge^q\Lambda$, with the local coefficient
system induced by monodromy; see \cite[Theorem~9.6]{DavisKirk}.
Since $U$ retracts onto a wedge of two circles, all columns with
$p\geq2$ vanish. There are therefore no differentials, and the
filtration in total degree two gives a short exact sequence
\begin{align}
 0\longrightarrow H_0(U;\textstyle\bigwedge^2\Lambda)
 \longrightarrow H_2(V)
 \xrightarrow{p}H_1(U;\Lambda)\longrightarrow0.
\label{graph}
\end{align}
We compute both end terms over $\ZZ$.

For the left term in \eqref{graph}, the coinvariant description recalled above gives
\begin{align}
 H_0(U;\textstyle\bigwedge^2\Lambda)
 =\frac{\bigwedge^2\Lambda}
 {(\bigwedge^2A_0-I)\bigwedge^2\Lambda+
  (\bigwedge^2A_1-I)\bigwedge^2\Lambda}.
\label{coinv}
\end{align}
Use the exterior basis
\begin{align}
 s_1&=\delta\wedge a,\quad s_2=\delta\wedge b,\quad
 s_3=\delta\wedge d,\label{wedgeone}\\
 s_4&=a\wedge b,\quad s_5=a\wedge d,\quad s_6=b\wedge d.
 \label{wedgetwo}
\end{align}
The matrices \eqref{amat} give
\begin{align}
 (\textstyle\bigwedge^2A_0-I)s_4=s_1,\qquad
 (\textstyle\bigwedge^2A_1-I)s_4=s_2.
\label{wedgefirst}
\end{align}
Thus $s_1,s_2$ vanish in the quotient. Modulo these two classes,
the relations obtained from $s_5,s_6$ include
\begin{align}
 s_5+s_6=0,\qquad
 s_3+6s_4+2s_5-s_6=0,\qquad
 s_3+6s_4+s_5-s_6=0.
\label{exterior}
\end{align}
Subtracting the last two relations gives $s_5=0$, and then
$s_6=0$ and $s_3=-6s_4$. Hence the quotient is generated by $[s_4]$.
There is no further relation on this generator: the monodromy-invariant
alternating form \eqref{invar2}, written in the ordinary dual basis, is
\begin{align}
 \xi=a^\vee\wedge b^\vee+6d^\vee\wedge\delta^\vee,
 \qquad \langle\xi,s_4\rangle=1.
\label{wedgepair}
\end{align}
It descends to an integer-valued homomorphism on the quotient, proving
\begin{align}
 H_0(U;\textstyle\bigwedge^2\Lambda)=\ZZ[a\wedge b].
\label{fibercoinv}
\end{align}

For the right term of \eqref{graph}, cellular homology on the wedge of
two circles gives
\begin{align}
 H_1(U;\Lambda)=\Ker(\mathcal D_1),\label{edgekernel}
\end{align}
where 
\begin{align}
 \mathcal D_1:\Lambda\oplus\Lambda\longrightarrow\Lambda,
 \qquad\mathcal D_1(v_0,v_1)&=(A_0-I)v_0+(A_1-I)v_1.
 \label{edgemap}
\end{align}
There are no cellular two-chains on this graph. The matrices \eqref{amat}, or the common coinvariants computed in
Proposition~\ref{mvred}, show that the image of $\mathcal D_1$ is
$\ZZ\delta\oplus\ZZ a\oplus\ZZ b$. Thus its kernel is a subgroup
of $\ZZ^8$ of rank five, and is therefore free of rank five.
Sequence \eqref{graph} becomes
\begin{align}
 0\longrightarrow\ZZ\longrightarrow H_2(V)
 \xrightarrow{p}\ZZ^5\longrightarrow0.
\label{ranksplit}
\end{align}
The right term is free, so the sequence splits (though not canonically).  This finishes the proof. 
\end{proof}
We denote the generator of the left term in \eqref{ranksplit} by 
\begin{align}
\label{sdef}
s=[a\wedge b],
\end{align}
and also write $s$ for its image in $H_2(V)$.

\subsection{Removing the Mumford relations}
The section $\sigma|_U$ identifies the underlying real torus bundle with
$(\Lambda\otimes\RR)/\Lambda$ in flat local coordinates; only the
complex structure varies. For $\lambda\in\Lambda^{A_i}$, denote
by $T_{\lambda,i}$ the two-torus obtained by sweeping the loop
$\lambda$ along the loop $\gamma_i$ lifted using the section
$\sigma|_U$.
Orient it by the fiber loop first and the positive base circle second.
The map $p$ in \eqref{graph} gives the fiber one-cycles obtained
by cutting a two-cycle over the two chosen loops in the base.
Thus $p(T_{\lambda,0})=(\lambda,0)$ and
$p(T_{\lambda,1})=(0,\lambda)$.

\begin{proposition}
\label{mumford}
For $i = 0,1,\infty$, $H^3(F_i) \cong \ZZ^2$. The image of $\tau_\infty$ is the
primitive subgroup generated by $T_{\delta,\infty},T_{a,\infty}$,
and the quotient
\begin{align}
 Q_\infty:=H_2(V)/\Image(\tau_\infty)
 =\ZZ\langle s,t,t_0,t_1\rangle
\label{quotient}
\end{align}
is free of rank four, where $s$ is defined in \eqref{sdef}, and 
\begin{align}
(t,t_0,t_1)=([T_{\delta,0}],[T_{u_0,0}],[T_{u_1,1}]).
\end{align}
 Consequently
\begin{align}
 H_2(X)\cong\Coker\left(
 H^3(F_0)\oplus H^3(F_1)
 \xrightarrow{\ \overline\tau_0+\overline\tau_1\ }Q_\infty\right),
 \label{fourmap}
\end{align}
where $\overline\tau_i$ is $\tau_i$ from \eqref{taudef}, followed by the quotient map.
\end{proposition}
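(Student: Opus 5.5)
The plan is to treat the three fillings separately for $H^3(F_i)$, then determine $\Image(\tau_\infty)$ explicitly inside the extension \eqref{ranksplit}, and finally pass to the quotient.

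\emph{Step 1: $H^3(F_i)\cong\ZZ^2$.} For $i=0,1$, $F_i$ is a closed oriented four-manifold (bielliptic). Poincar\'e duality gives $H^3(F_i)\cong H_1(F_i)$, which is free of rank two by the proof of Proposition~\ref{mvred}.

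For $i=\infty$, I would use the Mayer--Vietoris (weight) spectral sequence for the normal crossing fiber $F_\infty$. It is built from the normalization $dP_6$, the three double curves and the two triple points, and its dual complex is a triangulated $2$-torus: one vertex, three edges, two triangles. Two facts make the degree-three term computable:
\begin{itemize}
\item $H^1$ and $H^3$ of $dP_6$ vanish, and the double curves are rational;
\item so the only surviving contribution in degree three comes from $H^1$ of the dual complex with $H^2$-coefficients.
\end{itemize}
This should give $H^3(F_\infty)\cong\ZZ^2$. As a cross-check, it matches $\Lambda/K_\infty=\pi_1(F_\infty)\cong\ZZ^2$ from \eqref{jinf} via the duality $H^3(F_\infty)\cong H_3(N_\infty,M_\infty^5)$.

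\emph{Step 2: the image of $\tau_\infty$.} By the pair sequence, $\Image\partial_\infty=\Ker(H_2(M_\infty^5)\to H_2(F_\infty))$. For $\lambda\in K_\infty$, the circle actions \eqref{rotate} show that the fiber loop $\lambda$ bounds a disc in the filling: it collapses on the toric boundary. Hence the swept torus $T_{\lambda,\infty}$ is a boundary, coming from a $3$-chain whose restriction to the fiber is that disc.

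I would show that $T_{\delta,\infty}$ and $T_{a,\infty}$ are the images under $\partial_\infty$ of a basis of $H^3(F_\infty)\cong\ZZ^2$. The idea is that the dual classes are exactly the $3$-cycles ``disc $\times$ loop'' over the two generators of $\Lambda/K_\infty$.

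I then push these tori into $H_2(V)$. Using $\gamma_\infty=(\gamma_0\gamma_1)^{-1}$ and the cellular description \eqref{edgemap}, the projection $p$ is
\[
p(T_{\lambda,\infty})=-\bigl(\lambda,\,A_0^{-1}\lambda\bigr)\in\Ker\mathcal D_1,
\]
up to the fixed sign convention.
\begin{itemize}
\item For $\lambda=\delta$ this is $-(\delta,\delta)$, so $T_{\delta,\infty}\equiv -t-[T_{\delta,1}]$ modulo $\ZZ s$.
\item For $\lambda=a$ one gets a genuinely mixed element of $\Ker\mathcal D_1$.
\end{itemize}

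\emph{Step 3: the quotient $Q_\infty$.} I would compute an explicit $\ZZ$-basis of the rank-five lattice $\Ker\mathcal D_1$. Take the four elements $(\delta,0)$, $(u_0,0)$, $(0,\delta)$, $(0,u_1)$, which are $p$ of $t$, $t_0$, $T_{\delta,1}$, $t_1$. Then check, by solving $(A_0-I)v_0=-(A_1-I)v_1$ over $\ZZ$ with \eqref{amat}, that $p(T_{a,\infty})$ completes these four to a basis. Lifting to $H_2(V)$ via \eqref{ranksplit}, the six classes
\[
s,\ t,\ t_0,\ t_1,\ [T_{\delta,1}],\ T_{a,\infty}
\]
then form a basis of $H_2(V)$, since $s$ spans the kernel of $p$. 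Replacing $[T_{\delta,1}]$ by $T_{\delta,\infty}$ is a unimodular change of basis by the formula in Step~2, whatever the $s$-components are. So $\Image(\tau_\infty)$ is a primitive rank-two summand and $Q_\infty$ is free on $s,t,t_0,t_1$. Then \eqref{fourmap} follows from Proposition~\ref{mvred} by quotienting in stages.

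\emph{Main obstacle.} I expect the hardest parts to be the integral, not rational, statements:
\begin{itemize}
\item showing that $\partial_\infty$ hits exactly the tori $T_{\delta,\infty},T_{a,\infty}$ and not proper multiples of them;
\item verifying that $p(T_{a,\infty})$ is primitive modulo the four invariant-lattice elements.
\end{itemize}
I would first attempt the second point by a Smith normal form computation on $\Ker\mathcal D_1\subset\ZZ^8$. I would attempt the first via the disc-bounding argument from the toric circle actions.
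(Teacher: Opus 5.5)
Your architecture is the paper's. You compute $H^3(F_\infty)$ from the normalization, compute the peripheral classes $p(T_{\lambda,\infty})$, prove that $L=\langle T_{\delta,\infty},T_{a,\infty}\rangle$ is primitive via \eqref{ranksplit}, and then quotient in stages; the unimodular swap of $[T_{\delta,1}]$ for $T_{\delta,\infty}$ is fine. However, the Step~2 formula is wrong.

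With $\mathcal D_1$ as in \eqref{edgemap}, a coefficient on the edge $\gamma_i$ is recorded at its initial point and transported by $A_i$. Tracing $\gamma_\infty=\gamma_1^{-1}\gamma_0^{-1}$, with $\gamma_0^{-1}$ first, therefore puts $-A_0^{-1}\lambda$ on the $\gamma_0$-edge and $-A_1^{-1}A_0^{-1}\lambda=-A_\infty\lambda=-\lambda$ on the $\gamma_1$-edge. Since $A_0^{-1}=A_1A_\infty$, this gives $p(T_{\lambda,\infty})=-(A_1\lambda,\lambda)$, as in \eqref{periph}. Your $-(\lambda,A_0^{-1}\lambda)$ has the components swapped. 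It is what you get by recording coefficients at terminal points, which goes with the boundary map $(I-A_0^{-1})v_0+(I-A_1^{-1})v_1$, not with $\mathcal D_1$.

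The two formulas agree for $\delta$. For $a$, yours gives $(-a,b)$, and $\mathcal D_1(-a,b)=2\delta+3a-2b\neq0$, so your Step~3 check fails as written. With the correct value $(b,-a)$ it succeeds. Write $v_0=x_\delta\delta+x_aa+x_bb+x_dd$ and $v_1=y_\delta\delta+y_aa+y_bb+y_dd$. Then $\Ker\mathcal D_1$ is cut out by $x_a=2x_d$, $y_b=3y_d$, $x_b=2x_d-y_a+3y_d$, with $x_\delta,y_\delta,x_d,y_a,y_d$ free. In the coordinates $(x_d,y_a,y_d)$, the elements $(u_0,0)$, $(0,u_1)$, $(b,-a)$ are $(1,0,0)$, $(0,3,1)$, $(0,-1,0)$, with determinant $1$. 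Once corrected, your direct basis computation is a valid substitute for the paper's coset-representative proof of \eqref{edgebas}.

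The step you call the main obstacle is unnecessary, and your heuristic for it uses the wrong loops. The relative $3$-cycles are the section disc $\sigma(D_\infty)$ swept by the circle actions of $\delta,a\in K_\infty$, not loops over generators of $\Lambda/K_\infty$. Their boundaries are exactly $T_{\delta,\infty}$ and $T_{a,\infty}$, so $L\subseteq\Image(\tau_\infty)$ and the worry about proper multiples disappears. Since $H^3(F_\infty)\cong\ZZ^2$, $\Image(\tau_\infty)$ has rank at most two. Since $L$ is primitive of rank two, $\Image(\tau_\infty)\subseteq H_2(V)\cap(L\otimes\QQ)=L$. This is how the paper concludes, and you already have every ingredient.

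Finally, Step~1 at infinity is not yet a computation. The relevant row is $H^2(dP_6)\cong\ZZ^4\to\ZZ^3\to0$, given by differences of degrees on opposite hexagon sides. It is not the constant-coefficient cochain complex of the torus. You must check that the image is the primitive line $\ZZ(1,1,1)$, which the paper does with the opposite pairs $(E_j,H-E_k-E_l)$. Your comparison with $\pi_1(F_\infty)$ is only a numerical coincidence, since $F_\infty$ is singular and Poincar\'e duality is unavailable.
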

\begin{proof}
For $i=0,1$, Poincar\'e duality gives
$H^3(F_i)=H_1(F_i)=\ZZ^2$, including freeness by the proof of
Proposition~\ref{mvred}. At infinity the normalization pairs give
\begin{align}
 H^2(P)\oplus H^2(C)\longrightarrow H^2(D)
 \longrightarrow H^3(F_\infty)\longrightarrow0,
 \label{normal}
\end{align}
where $\nu:P\to F_\infty$ is the normalization and $C=\nu(D)$.
Write $P=\operatorname{Bl}_3\PP^2$, with divisor classes $H,E_1,E_2,E_3$.
The opposite pairs in $D$ are $(E_j,H-E_k-E_l)$, where
$\{j,k,l\}=\{1,2,3\}$. Quotienting $H^2(D)=\ZZ^6$ by the
pullbacks from $H^2(C)=\ZZ^3$ leaves the three differences of
degrees. The restriction of $AH-\sum c_jE_j$ has differences
$(c_1+c_2+c_3-A)(1,1,1)$. Therefore
\begin{align}
 H^3(F_\infty)=\ZZ^3/\ZZ(1,1,1)=\ZZ^2.
 \label{hinfty}
\end{align}

We next identify the contribution from infinity integrally.
Tracing $\gamma_\infty=\gamma_1^{-1}\gamma_0^{-1}$ gives
\begin{align}
 p(T_{\lambda,\infty})=\ell_\infty(\lambda)
 :=-(A_1\lambda,\lambda)\in\Ker(\mathcal D_1),
 \qquad \lambda\in K_\infty.
 \label{periph}
\end{align}
In particular, $\ell_\infty(\delta)=(-\delta,-\delta)$ and
$\ell_\infty(a)=(b,-a)$. We claim that
\begin{align}
 \frac{\Ker(\mathcal D_1)}{\ell_\infty(K_\infty)}
 =\ZZ\langle[(\delta,0)],[(u_0,0)],[(0,u_1)]\rangle.
 \label{edgebas}
\end{align}
To see this, let $(v_0,v_1)\in\Ker(\mathcal D_1)$.
Modulo $(A_0-I)\Lambda$, the relations $b=-a$ and $\delta=-3a$
give
\begin{align}
 0=[(A_1-I)v_1]
   =(3\psi(v_1)-b^\vee(v_1))[a].
 \label{edgequot}
\end{align}
Since $[a]$ has infinite order in this quotient,
$b^\vee(v_1)=3\psi(v_1)$. Put $n=\psi(v_1)$. Then
$v_1-nu_1\in K_\infty$, so adding
$\ell_\infty(v_1-nu_1)$ changes the second component to $nu_1$.
The kernel condition now puts the first component in
$\Lambda^{A_0}=\ZZ\delta\oplus\ZZ u_0$. Every coset therefore
has a representative $r(\delta,0)+w(u_0,0)+n(0,u_1)$.
It is unique: if such a representative equals
$\ell_\infty(\lambda)$, comparison of the second components gives
$nu_1=-\lambda\in K_\infty$, hence $n=0$ and $\lambda=0$;
then $r=w=0$. This proves \eqref{edgebas} over $\ZZ$.

Let $L\subset H_2(V)$ be generated by
$T_{\delta,\infty},T_{a,\infty}$. By \eqref{periph}, the map
$p$ identifies $L$ with $\ell_\infty(K_\infty)$. Thus
\eqref{graph} gives
\begin{align}
 0\longrightarrow\ZZ s\longrightarrow H_2(V)/L
 \longrightarrow
 \Ker(\mathcal D_1)/\ell_\infty(K_\infty)
 \longrightarrow0.
 \label{quotseq}
\end{align}
The three finite sweep tori lift the basis in \eqref{edgebas}.
Together with $s$, their classes form an integral basis of
$H_2(V)/L$. In particular, this quotient is free and $L$ is primitive.

Sweep the section disc $\sigma(D_\infty)$ by the circle actions corresponding
to $\delta$ and $a$. Since these actions extend over the filling,
this gives relative three-cycles in $(N_\infty,M_\infty^5)$
whose boundaries are $T_{\delta,\infty},T_{a,\infty}$.
Thus $\Image(\tau_\infty)$ contains $L$. By \eqref{hinfty} its
rank is at most two, so primitivity gives
$\Image(\tau_\infty)=L$. This proves \eqref{quotient}, and
\eqref{coker} gives \eqref{fourmap}.
\end{proof}

Thus \eqref{coker} is a map $\ZZ^6\to\ZZ^6$ whose two infinity
columns generate a primitive subgroup. Removing these columns and
their coordinates leaves the map $\ZZ^4\to\ZZ^4$ in~\eqref{fourmap}. 

The invariant form $\xi$ defines a closed real two-form $\Omega$ on
$V$, constant in the flat fiber coordinates and with no horizontal
components. It integrates to one on $s$ and to zero on every sweep
torus. Hence it vanishes on $\Image(\tau_\infty)$ and induces
the first-coordinate functional in the integral basis
$(s,t,t_0,t_1)$ of $Q_\infty$. In particular, $\Omega$ has integral
periods, and its integral determines the first coordinate used below.

\subsection{The finite-filling maps}

For $i=0,1$, use Poincar\'e duality to identify
$H^3(F_i)$ with $H_1(F_i)$. Abelianizing \eqref{pres} gives
\begin{align}
 H_1(F_0)&=\ZZ\langle a,d,g_0\rangle/
                  \ZZ(3r_0a-n_0d+m_0g_0),
 \label{hzero}\\
 H_1(F_1)&=\ZZ\langle a,d,g_1\rangle/
                  \ZZ(2r_1a-n_1d+m_1g_1).
 \label{hone}
\end{align}
Since $\gcd(m_i,n_i) = 1$ by assumption, choose integers $\alpha_i,\beta_i$ with
\begin{align}
 m_i\alpha_i+n_i\beta_i=1,
\end{align}
and define
\begin{align}
 z_i=\alpha_i d+\beta_i g_i.
 \label{zclass}
\end{align}
Then $(a,z_i)$ is an integral basis of $H_1(F_i;\ZZ)$. Indeed, writing $s_0=3,s_1=2$,
the relations give
$d=m_i z_i+s_i r_i\beta_i a$ and
$g_i=n_i z_i-s_i r_i\alpha_i a$.

Given a loop $\gamma$ in $F_i$, take a small disc in the complex normal line to $F_i\subset X$ at each point of $\gamma$. The boundary circles of these discs form a circle bundle over $\gamma$, hence a two-torus in $M_i^5$. We call this torus the \emph{normal circle torus} of $\gamma$, and orient it by the loop first and the positive normal circle second. With the sign convention in \eqref{taudef}, it represents $\partial_i[\gamma]\in H_2(M_i^5)$, and its image in $H_2(V)$ is $\tau_i[\gamma]$.

Let $\varepsilon_i:H_2(M_i^5)\to\Lambda^{A_i}$ be the homomorphism obtained by intersecting a representative two-cycle transversely with a fixed smooth fiber, and put $\mathcal N_i=\sum_{j=0}^{m_i-1}A_i^j$. The resulting fiber one-cycle has monodromy-invariant homology class.
We choose the intersection orientation so that an oriented fiber
loop swept around the positive base circle, with the fiber direction
first, maps to that loop.
\begin{lemma}
\label{tube}
For $\lambda\in\Lambda$, viewed also as a class in $H_1(F_i)$,
\begin{align}
 \varepsilon_i(\partial_i\lambda)&=\mathcal N_i\lambda, \quad
 \int_{\tau_i\lambda}\Omega =-\xi(\lambda,v_i),
 \label{tubel}\\
 \varepsilon_i(\partial_i g_i)&=v_i, \qquad
 \int_{\tau_i g_i}\Omega =0.
 \label{tubeg}
\end{align}
\end{lemma}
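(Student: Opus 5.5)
The plan is to compute everything in the explicit local model of Proposition~\ref{logprop}, then transport the result to $V$ by the inverse gluing map $\Psi_i^{-1}$. Use the $\rho_i$-equivariant smooth trivialization of $\mathcal T_i$ from the proof of Proposition~\ref{bound}. In it, a neighborhood of $F_i$ is the quotient of $\{(x,s)\}$ by $\widetilde\rho_i$, with $x$ in the flat real torus $(\Lambda\otimes\RR)/\Lambda$, and $s$ is a transverse coordinate to $F_i$, since the action is free. A positive normal circle at a point of $F_i$ is then the image of the loop $s=\epsilon e^{2\pi\sqrt{-1}\phi}$, $\phi\in[0,1]$, with $x$ fixed. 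Along this loop $q=s^{m_i}$ makes $m_i$ turns.

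\emph{Step 1: the class $\lambda$.} Lift the loop $\lambda$ in $F_i$ to the straight segment $t\mapsto t\lambda$ in the central fiber of $\mathcal T_i$. The normal circle torus is the image of $(t,\phi)\mapsto (t\lambda,\epsilon e^{2\pi\sqrt{-1}\phi})$. By \eqref{glue} and the normalization after it, $\Psi_i^{-1}$ subtracts $\sigma_i(s)$. In flat coordinates over $V$ this map is
\[
(t,\phi)\mapsto t\lambda - m_i\phi\, a_i = t\lambda-\phi v_i ,
\]
lying over the base point $q=\epsilon^{m_i}e^{2\pi\sqrt{-1}m_i\phi}$. To go from the $s$-chart to the $q$-chart, use the $m_i$ sheets of the cover. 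Passing from sheet $j$ to sheet $j+1$ applies $B_s$, which acts on lattices by $A_i^{-1}$. So the identification with the fixed fiber over $q_0$ applies $A_i^{j}$ on sheet $j$, up to the orientation convention fixed for $\varepsilon_i$.
\begin{itemize}
\item Intersecting with the fiber over $q_0$ picks out the $m_i$ values $\phi=j/m_i$. Each contributes the fiber loop $A_i^{j}\lambda$, since the translation by $-\phi v_i$ does not change the homology class. Summing gives $\varepsilon_i(\partial_i\lambda)=\mathcal N_i\lambda$.
\item For the $\Omega$-integral, note that $\Omega$ is the constant form $\xi$ in flat coordinates with no horizontal part, and $\xi$ is $A_i$-invariant. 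So on every sheet the pullback of $\Omega$ is $\xi(\partial_t,\partial_\phi)\,dt\wedge d\phi=\xi(\lambda,-v_i)\,dt\wedge d\phi$. Integrating over $[0,1]^2$, with the orientation "loop first, normal circle second", gives $-\xi(\lambda,v_i)$.
\end{itemize}

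\emph{Step 2: the class $g_i$.} A loop representing $g_i$ lifts to a path in $\{s=0\}$ from a point $x$ to $\widetilde\rho_i(x)=B_0x+a_i$. Choose $x=o_i$, so the path can be taken to be the straight segment $t\mapsto ta_i$ composed with the identification by $\rho_i$. The normal circle torus is again swept by $\phi$. In the $V$-coordinates its tangent directions are $\partial_t\mapsto a_i$ and $\partial_\phi\mapsto -v_i$, with an $s$-rotation by $\zeta_i$ built into the closing-up map.
\begin{itemize}
\item For the $\Omega$-integral, $\xi(a_i,-v_i)=-\tfrac1{m_i}\xi(v_i,v_i)=0$, so $\int_{\tau_ig_i}\Omega=0$.
\item For $\varepsilon_i$, I would rather argue homotopically. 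The normal circle torus of $g_i$ is the image of a torus spanned by the commuting classes $x_i$ and $\mu_i=(-v_i,m_i)$ in $\pi_1(M_i^5)=\Lambda\rtimes_{A_i}\ZZ$. Slicing the torus spanned by $(0,1)$ and $(-v_i,m_i)$ with a single fiber gives the fiber cycle given by the $2\times2$ determinant, namely $v_i$. As a check, $\mathcal N_i v_i=m_iv_i$ is consistent with $\mu_i=v_i^{-1}x_i^{m_i}$ when $\lambda=v_i$.
\end{itemize}

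\emph{Main obstacle.} I expect the difficulty to be sign and orientation bookkeeping rather than any conceptual point. The conventions that must be made compatible are: Poincar\'e--Lefschetz duality in \eqref{taudef}, the intersection orientation defining $\varepsilon_i$, the direction in which $B_s$ versus $A_i^{\pm1}$ appears in the sheet identification, and the sign of $\sigma_i$ in $\Psi_i^{-1}$. To control these, I would first verify the formulas on $\lambda=v_i$. There the normal circle torus of $v_i$ should be homologous to $m_i$ times the basic sweep torus, which pins down all the signs simultaneously.
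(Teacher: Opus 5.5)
Your proposal is correct. Step~1 is essentially the paper's argument: the same parametrization $t\lambda-\phi v_i$ of the normal circle torus, the same cut into the $m_i$ cycles $A_i^j\lambda$, and the same evaluation $\xi(\lambda,-v_i)$. For \eqref{tubeg}, however, you take a genuinely different, direct route.

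The paper does no new geometry for $g_i$. The relation $g_i^{m_i}=v_i$ in \eqref{pres} gives $m_i[g_i]=[v_i]$ in $H_1(F_i)$. Applying the homomorphisms $\varepsilon_i\circ\partial_i$ and $c\mapsto\int_{\tau_ic}\Omega$, and using \eqref{tubel} with $\lambda=v_i$, gives $m_i\varepsilon_i(\partial_ig_i)=\mathcal N_iv_i=m_iv_i$ and $m_i\int_{\tau_ig_i}\Omega=-\xi(v_i,v_i)=0$. One then cancels $m_i$ in the torsion-free groups $\Lambda^{A_i}$ and $\RR$. Your closing ``consistency check'' is exactly this argument, so it could serve as the proof.

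Your direct computation also works. The $\Omega$ part is robust: the pulled-back form $\xi(a_i,-v_i)\,dt\wedge d\phi$ vanishes identically, so neither orientation nor the choice of lift matters. The $\varepsilon_i$ part also goes through:
\begin{itemize}
\item Since $M_i^5$ is a $T^4$-bundle over a circle, hence aspherical, the torus is determined up to homotopy by the commuting pair in $\pi_1$.
\item Another lift $x_i\mu_i^k$ gives the same torus after an orientation-preserving shear.
\item With $e_1\mapsto x_i$ and $e_2\mapsto\mu_i$, the slice is the single curve of class $m_ie_1-e_2$. Its image is $x_i^{m_i}\mu_i^{-1}=v_i$, with the correct sign under the paper's orientation conventions.
\end{itemize}

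What your route buys is an explicit representative: in the coordinates of $V$, the chosen lift of $g_i$ is just $\sigma$ over the base circle, and the torus is swept by $-\phi v_i$. What the paper's route buys is that every sign for $g_i$ is inherited from the $\lambda$ case, which removes the bookkeeping obstacle you anticipated.

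One caution: your ``$2\times2$ determinant'' description of the slice is valid here only because the first generator has no fiber component and $v_i$ is $A_i$-invariant. In general the slice is a twisted sum, as your own Step~1 shows.
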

\begin{proof}
By \eqref{jfin}, the positive normal circle represents $(-v_i,m_i)$. Since $|s|=1$ on the boundary, the gluing map \eqref{glue} describes its motion in flat coordinates by
\begin{align*}
(x,\theta)\longmapsto(x-u v_i,\theta+m_i u),
\qquad 0\leq u\leq1,
\end{align*}
where increasing $\theta$ by one corresponds to one turn around the base.

For $\lambda\in\Lambda$, the normal circle torus of the corresponding loop is therefore parametrized by
\begin{align*}
(r,u)\longmapsto(r\lambda-u v_i,m_i u),
\qquad 0\leq r,u\leq1.
\end{align*}
This torus represents $\partial_i\lambda\in H_2(M_i^5)$. Cutting it along a reference fiber gives the $m_i$ fiber one-cycles
$\lambda,A_i\lambda,\ldots,A_i^{m_i-1}\lambda$. By the definition of $\varepsilon_i$, we obtain
\begin{align*}
\varepsilon_i(\partial_i\lambda)
=\sum_{j=0}^{m_i-1}A_i^j\lambda
=\mathcal N_i\lambda.
\end{align*}
This proves the first formula in \eqref{tubel}.

The vertical components of the derivatives in the $r$ and $u$ directions are $\lambda$ and $-v_i$, respectively. Since $\Omega$ is given by $\xi$ in the flat fiber coordinates and has no horizontal components, its integral over the normal circle torus is
\begin{align*}
\int_{\tau_i\lambda}\Omega
=\int_0^1\int_0^1\xi(\lambda,-v_i)\,dr\,du
=-\xi(\lambda,v_i).
\end{align*}
This proves the second formula in \eqref{tubel}.

To prove \eqref{tubeg}, recall that $m_i[g_i]=[v_i]$ in $H_1(F_i)$. Applying the homomorphism $\varepsilon_i\circ\partial_i$ and using the first formula in \eqref{tubel} gives
\begin{align*}
m_i\,\varepsilon_i(\partial_i g_i)
=\varepsilon_i(\partial_i v_i)
=\mathcal N_i v_i
=m_i v_i,
\end{align*}
where the last equality holds because $A_i v_i=v_i$. Since $\Lambda^{A_i}$ is torsion-free, we can cancel $m_i$ to obtain $\varepsilon_i(\partial_i g_i)=v_i$. This is the first formula in \eqref{tubeg}.

Finally, applying the homomorphism $c\mapsto\int_{\tau_i c}\Omega$ to the same relation and using the second formula in \eqref{tubel} gives
\begin{align*}
m_i\int_{\tau_i g_i}\Omega
=\int_{\tau_i v_i}\Omega
=-\xi(v_i,v_i)
=0.
\end{align*}
The last equality holds because $\xi$ is alternating. Since $m_i>0$, we obtain $\int_{\tau_i g_i}\Omega=0$, proving the second formula in \eqref{tubeg}.
\end{proof}
\begin{proposition}
\label{matrix}
In the basis $(a,z_0,a,z_1)$ of
$H^3(F_0)\oplus H^3(F_1)$ and basis $(s,t,t_0,t_1)$,
the matrix in \eqref{fourmap} is
\begin{align}
 B=\begin{pmatrix}
 -2n_0&-6r_0\alpha_0&-3n_1&-6r_1\alpha_1\\
 -k_0&r_0\beta_0&2\ell_0&-r_1\beta_1\\
 0&1&0&0\\
 0&0&0&1
 \end{pmatrix},
\label{Bmat}
\end{align}
which satisfies
\begin{align}
 \det B=m_1n_0+m_0n_1=m_0m_1\psi(a_0+a_1).
 \label{fourmat}
\end{align}
\end{proposition}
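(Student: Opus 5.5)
The plan is to compute each column of $B$ by determining the image in $Q_\infty$ of the four basis classes $a, z_0$ (in $H^3(F_0)\cong H_1(F_0)$) and $a, z_1$ (in $H^3(F_1)$). The tools are two invariants that together detect coordinates in the basis $(s,t,t_0,t_1)$:
\begin{itemize}
\item the $\Omega$-integral, which is the $s$-coordinate;
\item the projection $p$ onto $\Ker(\mathcal D_1)/\ell_\infty(K_\infty)$ from \eqref{quotseq}, which is read off in the basis \eqref{edgebas}.
\end{itemize}

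For a normal circle torus over $F_i$, its image under $p$ should be $\varepsilon_i(\partial_i c)$, placed in the $i$-th slot. This is because the torus lies over a circle homotopic to $\gamma_i$, and cutting it over the reference fiber gives the fiber one-cycle. So Lemma~\ref{tube} gives everything:
\begin{itemize}
\item $p(\tau_i\lambda)=\mathcal N_i\lambda$ and $\int_{\tau_i\lambda}\Omega=-\xi(\lambda,v_i)$;
\item $p(\tau_i g_i)=v_i$ and $\int_{\tau_i g_i}\Omega=0$.
\end{itemize}
Then $z_i=\alpha_i d+\beta_i g_i$ is handled by linearity, with $d\in\Lambda$.

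The concrete steps are as follows.

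First, compute $\mathcal N_i a$ and $\mathcal N_i d$. Since $A_i$ has order $3$ or $4$ on the relevant part, $\mathcal N_i=k_0(I+A_0+A_0^2)$ or $\ell_0(I+A_1+A_1^2+A_1^3)$, provided $A_0^3=I$ and $A_1^4=I$; I will check this from \eqref{amat}. From the matrices I expect $\mathcal N_0$ to land in $\Lambda^{A_0}=\ZZ\delta\oplus\ZZ u_0$, and similarly for $i=1$. I will express the results as $r\delta+w u_i$. Coordinates $(r,w)$ in slot $0$ give $r\,t+w\,t_0$. In slot $1$, the class $(\delta\text{-multiple},\,\cdot\,)$ must be moved into the basis \eqref{edgebas} using $\ell_\infty(\delta)=(-\delta,-\delta)$, so that $(0,\delta)\equiv(-\delta,0)$ contributes $-t$. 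This sign bookkeeping explains the $-r_1\beta_1$ entry and the sign of $2\ell_0$.

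Second, compute the $\Omega$ column entries. These are $-\xi(a,v_i)$ and $-\xi(d,v_i)$, with $v_i=r_i\delta+n_iu_i$. Using $\xi=a^\vee\wedge b^\vee+6d^\vee\wedge\delta^\vee$ and $u_i=d+s'a+s'b$ (with $s'=2,3$), I expect $\xi(a,u_0)=2$, giving $-2n_0$, and $\xi(d,\delta)=6$, giving the $-6r_i\alpha_i$ terms.

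Third, assemble the matrix. In the $z_i$ columns the $t_i$ entry comes from $\alpha_i(\psi$-part of $\mathcal N_i d)+\beta_i n_i$. This should equal $m_i\alpha_i+n_i\beta_i=1$, which explains the identity rows. The $t$ entry similarly combines to $r_i\beta_i$ up to sign, after the $\delta$-parts of $\mathcal N_i d$ cancel. Cancellation here would require $\mathcal N_0 d$ to have $\delta$-coefficient $0$; I will verify this rather than assume it.

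Finally, the determinant: expanding along the last two rows reduces it to the $2\times2$ minor $(-2n_0)(2\ell_0)-(-3n_1)(-k_0)$. This gives $-(4\ell_0n_0+3k_0n_1)$, so I must recheck signs (possibly via the orientation convention in \eqref{taudef}) to match $m_1n_0+m_0n_1$ up to the stated sign. The equality with $m_0m_1\psi(a_0+a_1)$ is immediate from $\psi(u_i)=1$ and $\psi(\delta)=0$.

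The main obstacle is the second row. It depends on correctly rewriting slot-$1$ classes modulo $\ell_\infty(K_\infty)$ (including $\ell_\infty(a)=(b,-a)$ whenever $\mathcal N_1 a$ produces $a$- or $b$-components) and on the orientation conventions for $p$ versus $\varepsilon_i$. I would handle it by explicitly computing $\mathcal N_1 a$ and $\mathcal N_1 d$ and reducing via the algorithm in the proof of \eqref{edgebas}.
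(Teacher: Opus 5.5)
Your route is the same as the paper's. You read the last three coordinates from $p$ via $\varepsilon_i\circ\partial_i$ and Lemma~\ref{tube}, and the $s$-coordinate from $\Omega$. You move slot-$1$ $\delta$-terms using $[(0,\delta)]=-[(\delta,0)]$, and then substitute $z_i=\alpha_i d+\beta_i g_i$.

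The checks you deferred all go through. We have $A_0^3=A_1^4=I$, and
\[
\mathcal N_0a=-k_0\delta,\quad \mathcal N_0d=m_0u_0,\quad \mathcal N_1a=-2\ell_0\delta,\quad \mathcal N_1d=m_1u_1.
\]
So $\mathcal N_i d$ has no $\delta$-part, and no $a$- or $b$-components arise that would require $\ell_\infty(a)$.

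The one actual error is in your determinant. After expanding along the last row and then the third row, the surviving entry $1$ sits in position $(3,2)$. It carries the cofactor sign $(-1)^{3+2}=-1$, which you dropped. Hence
\[
\det B=-\det\begin{pmatrix}-2n_0&-3n_1\\-k_0&2\ell_0\end{pmatrix}=4\ell_0n_0+3k_0n_1=m_1n_0+m_0n_1
\]
exactly, as stated, not merely up to sign. There is no discrepancy to repair, so you should not revisit the orientation convention in \eqref{taudef}. Once the columns are shown to be those of $B$, its determinant is pure algebra, and changing conventions would alter the matrix rather than fix anything.
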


\begin{proof}
The monodromy matrices give
\begin{align}
 \mathcal N_0a=-k_0\delta,\qquad
 \mathcal N_0d=m_0u_0,\qquad
 \mathcal N_1a=-2\ell_0\delta,\qquad
 \mathcal N_1d=m_1u_1.
 \label{norms}
\end{align}
In the quotient $\Ker(\mathcal D_1)/\ell_\infty(K_\infty)$ appearing in \eqref{quotseq}, we have $[(0,\delta)]=-[(\delta,0)]$, since $\ell_\infty(\delta)=(-\delta,-\delta)$ by \eqref{periph}. Thus the $\delta$ contribution from the filling at $1$ has the opposite $t$ coordinate to the same contribution from the filling at $0$. Using
$\xi(a,u_0)=2$, $\xi(a,u_1)=3$, $\xi(d,\delta)=6$, and
$\xi(d,u_i)=0$, Lemma~\ref{tube} gives
\begin{align}
 \overline\tau_0(a)&=(-2n_0,-k_0,0,0), & \overline\tau_0(d)&=(-6r_0,0,m_0,0), & 
 \overline\tau_0(g_0)&=(0,r_0,n_0,0),\\
\overline\tau_1(a)&=(-3n_1,2\ell_0,0,0), & \overline\tau_1(d)&=(-6r_1,0,0,m_1), & \overline\tau_1(g_1)&=(0,-r_1,0,n_1).
\end{align}
The surjection in \eqref{quotseq} determines the last three coordinates in the basis $(s,t,t_0,t_1)$, and integration against $\Omega$ determines the first.
Substituting \eqref{zclass} gives \eqref{Bmat}.
Expanding along its last two rows gives the first equality in \eqref{fourmat}:
\begin{align}
 \det B=-\det\begin{pmatrix}-2n_0&-3n_1\\-k_0&2\ell_0\end{pmatrix}
       =4\ell_0n_0+3k_0n_1.
 \label{detb}
\end{align}
Since $\psi(\delta)=0$ and $\psi(u_i)=1$, \eqref{logdat} gives $\psi(a_i)=n_i/m_i$, which proves the second equality in~\eqref{fourmat}.
\end{proof}

\begin{corollary}
\label{sphere}
Every choice satisfying \eqref{bezout} gives a complex threefold
$X$ diffeomorphic to $S^6$. Such choices exist for every relatively
prime pair $m_0=3k_0$, $m_1=4\ell_0$.
\end{corollary}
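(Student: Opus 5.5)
The plan is to reduce the statement to the classification of simply connected $6$-manifolds. Given data satisfying \eqref{bezout}, Theorem~\ref{simply} shows that $X$ is simply connected. Proposition~\ref{mumford} together with Proposition~\ref{matrix} identifies $H_2(X)$ with the cokernel of the integral matrix $B$. Since $\det B=m_1n_0+m_0n_1=\pm1$, the matrix $B$ is unimodular, so this cokernel vanishes and $H_2(X)=0$.

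Next I would compute the remaining homology. Because $X$ is a compact, connected, oriented $6$-manifold (complex, hence oriented), and $H_1=H_2=0$, Poincar\'e duality together with universal coefficients gives the following. First, $H_4(X)\cong H^2(X)\cong\Hom(H_2,\ZZ)\oplus\mathrm{Ext}(H_1,\ZZ)=0$. Second, $H_5(X)\cong H^1(X)=0$. Third, $H_3(X)$ is torsion-free, since its torsion would equal the torsion of $H^4\cong H_2=0$. To see that $H_3(X)=0$, I would compute the Euler characteristic. Since $\chi$ is additive over the fibration, $\chi(X)=\sum\chi(\text{singular fibres})$ weighted appropriately. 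Smooth fibres are tori with $\chi=0$. The bielliptic fibres $F_0,F_1$ have $\chi=0$. The Mumford fibre $F_\infty$ has $\chi=\chi(dP_6)-\chi(\text{double locus contribution})$; its Betti numbers follow from \eqref{hinfty} and Proposition~\ref{bound}, and one expects $\chi(F_\infty)=2$. An alternative route to the same conclusion uses Mayer--Vietoris with $\chi(V)=0$ and $\chi(M_i^5)=0$, giving $\chi(X)=\chi(F_0)+\chi(F_1)+\chi(F_\infty)$. Granting $\chi(X)=2$, we get $b_3=0$, so $X$ is a homotopy $6$-sphere. This Euler characteristic computation, specifically getting $\chi(F_\infty)$ right from the normalization description, is the step I expect to need the most care. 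Alternatively, one could cite Engel's computation, which is independent of the logarithmic transformation data, since the fibres over $0,1$ always have $\chi=0$.

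Then I would invoke the classification of smooth homotopy $6$-spheres. By the $h$-cobordism theorem (Smale), a simply connected closed smooth $6$-manifold homotopy equivalent to $S^6$ is homeomorphic to $S^6$. Moreover, the Kervaire--Milnor group $\Theta_6$ is trivial, so $X$ is diffeomorphic to $S^6$. Equivalently, one can use Wall's classification of simply connected $6$-manifolds with torsion-free homology.

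Finally, the existence statement follows from the last paragraph of the proof of Theorem~\ref{simply}. When $\gcd(3k_0,4\ell_0)=1$, Bezout gives integers $n_0,n_1$ with $m_1n_0+m_0n_1=1$. This equation forces $\gcd(n_i,m_i)=1$, so with arbitrary $r_i$ the resulting data are admissible in the sense of \eqref{logdat} and satisfy \eqref{bezout}.
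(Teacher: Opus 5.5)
Your proposal is correct and follows essentially the same route as the paper. Both arguments take $\pi_1(X)=0$ from Theorem~\ref{simply} and $H_2(X)=0$ from the unimodularity of $B$. Both use Poincar\'e duality and universal coefficients to kill $H_4$, $H_5$ and the torsion of $H_3$, then use $\chi(X)=2$ to kill $H_3$ (the paper simply cites Engel for $\chi(F_\infty)=2$), and finish with Hurewicz, Whitehead and $\Theta_6=0$.

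If you want to compute $\chi(F_\infty)$ yourself, note that \eqref{hinfty} and Proposition~\ref{bound} only give $b_1=b_3=2$, not $b_2$. The cleaner route is $\chi(F_\infty)=\chi(P)-\chi(D)+\chi(C)=6-6+2=2$, where $C$ consists of three rational curves, each passing through both triple points.
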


\begin{proof}
Theorem~\ref{simply} gives $\pi_1(X)=0$.
By Proposition~\ref{matrix}, the map \eqref{fourmap} is unimodular,
so $H_2(X)=0$. The regular and bielliptic fibers have Euler
characteristic zero, and the Mumford fiber has Euler characteristic
two~\cite[Proposition~7.1]{EngelArxiv}; hence $\chi(X)=2$.
Integral Poincar\'e duality and the universal coefficient theorem
give $H_4(X)=H_5(X)=0$ and
$H_3(X)\cong\Hom(H_3(X),\ZZ)$. Thus $H_3(X)$ is free, and
$\chi(X)=2$ forces it to vanish.
The simply connected integral homology sphere $X$ is a homotopy
six-sphere by Hurewicz and Whitehead. Since $\Theta_6=0$
\cite[pp.~504--505]{KM}, it is diffeomorphic to $S^6$.
The existence claim follows from Theorem~\ref{simply}.
\end{proof}

The remaining statements in Theorem~\ref{t:main} are proved in the following.  
\begin{proposition}
\label{nonbih}
Let $X=X_{3k,4l}(u)$ and $X'=X_{3k',4l'}(u')$ be threefolds
obtained as in Corollary~\ref{sphere}. If $X$ and $X'$ are
biholomorphic, then $k=k'$ and $l=l'$. Moreover, the local Kuranishi
space of every such $X$ has complex dimension at least one.
\end{proposition}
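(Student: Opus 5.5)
Two things have to be shown: a biholomorphism invariant that recovers $(k,l)$, and a positive-dimensional family of deformations.

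For the first claim, the plan is to recover the multiplicities intrinsically from the complex structure. The key step is to show that the fibration $f:X\to\PP^1$ is determined by $X$ itself, up to automorphisms of $\PP^1$. I would try to get this from the algebraic reduction, or from a canonical meromorphic function field. Since $X$ is a compact complex threefold fibered over $\PP^1$ with generic fiber a non-algebraic complex $2$-torus, I expect the field of meromorphic functions $\mathcal M(X)$ to be exactly $f^*\CC(\PP^1)$. The argument I have in mind is the following. A meromorphic function restricts to each generic fiber. Generic fibers in the family have no nonconstant meromorphic functions, because $u$ and the base point vary the periods so that the very general fiber has Picard number zero. Hence every meromorphic function is constant on fibers and factors through $f$. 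The algebraic reduction is then $f$ itself, canonical up to $\mathrm{Aut}(\PP^1)$. Consequently a biholomorphism $X\to X'$ carries fibers to fibers and descends to a Möbius map $\PP^1\to\PP^1$.

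Once this is in place, the multiple fibers are intrinsic. The fibration $f$ has exactly two multiple fibers: $3k\,F_0$ over $0$ and $4l\,F_1$ over $1$. The fiber over $\infty$ is reduced, since it is the Mumford fiber. All other fibers are reduced smooth tori. The multiplicity of a fiber is the gcd of the vanishing orders of $f^*$ of a local coordinate along its components, so it is preserved by a fiber-preserving biholomorphism. The unordered multiset $\{3k,4l\}$ is therefore an invariant.

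To get the ordered pair, I would use $\gcd(3k,4l)=1$. Exactly one of the two multiplicities is divisible by $3$, and it belongs to the fiber over $0$. Alternatively, one can distinguish the two fibers by the order of the linear part of the action in Proposition~\ref{logprop}: it is $3$ at $0$ and $4$ at $1$, which is visible in the bielliptic structure of $F_i$ through $\pi_1(F_i)$. Either way, $3k=3k'$ and $4l=4l'$.

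For the Kuranishi statement, I would exhibit the family $u\mapsto X_{3k,4l}(u)$ over a small punctured disc, with the logarithmic data held fixed. The plan is to show that it is not locally trivial. The period map of the smooth fibers depends nontrivially on $u$, so generic fibers for distinct $u$ are non-isomorphic tori. Because the fibration is canonical, a biholomorphism $X(u)\cong X(u')$ would identify some fibers. Countability arguments then force $u'$ to lie in a discrete set for each $u$. The family is therefore not isotrivial, so the Kodaira–Spencer map is nonzero and the Kuranishi space has positive dimension.

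I expect the main obstacle to be the canonicity of the fibration, namely the claim that $\mathcal M(X)=f^*\CC(\PP^1)$. If that proves hard, my fallback is to show directly that any holomorphic map from a generic fiber $T$ to $\PP^1$ composed with $f'$ is constant. This would use that $T$ is a torus while $\PP^1$ admits no nonconstant maps from tori of algebraic dimension zero, and that $H^1(T,\mathcal O)$-type arguments carry the fibration over to $X'$.
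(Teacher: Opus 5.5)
\textbf{Verdict: there is a genuine gap, in the step you flag yourself.} Your conclusions downstream of that step are sound. If $f_{X'}\circ\Phi=h\circ f_X$ with $h$ M\"obius, fiber multiplicities are preserved. Since $3\mid 3k$ while $3\nmid 4l$, the multiset $\{3k,4l\}$ determines the ordered pair. This is a valid alternative to the paper's use of the monodromy orders $3,4,\infty$ to force $h=\mathrm{id}$.

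The gap is the claim $\mathcal M(X)=f^*\CC(\PP^1)$, and the justification you give for it does not work. For a fixed $X$ the parameter $u$ is fixed, so the fibers form only a one-parameter family, and nonconstant periods do not imply Picard number zero. In fact the claim fails here. Each smooth fiber is the quotient of the $\CC^\times$-extension $M^\times|_{E_t}$, whose extension class is $P(t)-O(t)$, by a group translation lying over $-6P(t)$. Because these two are proportional, the periods can be normalized to $\begin{pmatrix}1&0&\gamma&\mu\\0&1&\tau&\kappa\gamma\end{pmatrix}$ with $\kappa=\pm6$. This satisfies the Riemann symmetry relation for an integral alternating form of type $(1,6)$ (compare $\xi$ in \eqref{invar2}), so every fiber has Picard number at least one.

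Whether the very general fiber still has algebraic dimension zero then depends on the signature of this class. For $N=A_\infty-I$, the form $\xi(N\cdot,\cdot)=\mathrm{diag}(1,-6)$ on $\Lambda/K_\infty$ suggests the class is indefinite. Settling this is a real Hodge-theoretic problem that your proposal does not address. Your fallback has the same dependence, since tori with $a(T)\ge1$ do admit nonconstant holomorphic maps to $\PP^1$.

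\textbf{The missing idea is to use $H^2(X;\RR)=0$, which makes the algebraic dimension of the fibers irrelevant.} Put $g=f_{X'}\circ\Phi$. Then $g^*\omega_{\mathrm{FS}}=d\eta$ for some one-form $\eta$. For a regular fiber $T$ and any K\"ahler form $\omega_T$ on it, Stokes gives $\int_T (g|_T)^*\omega_{\mathrm{FS}}\wedge\omega_T=0$. But the integrand is pointwise nonnegative and positive wherever $d(g|_T)\neq0$. Hence $g$ is constant on regular fibers, so $g=h\circ f_X$, and the same argument for $\Phi^{-1}$ makes $h$ an automorphism.

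With this substituted, your multiplicity argument and your Kuranishi argument go through essentially as in the paper, with two small adjustments. First, to compare fibers over the same $t$ you need $h=\mathrm{id}$, not just some identification of fibers. This follows because $h$ must fix the fiber of multiplicity $3k$, the fiber of multiplicity $4l$, and the only other singular fiber, the Mumford fiber. Second, non-local-isotriviality of the $u$-family does not by itself make the Kodaira--Spencer class at $u_*$ nonzero. What it gives is that the classifying map from the $u$-disc to the Kuranishi space of the member at $u_*$ is nonconstant, and that is what yields dimension at least one.
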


\begin{proof}
Suppose $\Phi:X\to X'$ is a biholomorphism, write $f_X,f_{X'}$ for
their fibrations, and put $g=f_{X'}\circ\Phi$.
Since $H^2(X;\RR)=0$, the form $g^*\omega_{\mathrm{FS}}$ is exact.
For any regular fiber $T$ of $f_X$, choose a K\"ahler form $\omega_T$.
Stokes' theorem gives
\begin{align}
 \int_T (g|_T)^*\omega_{\mathrm{FS}}\wedge\omega_T=0.
 \label{fibint}
\end{align}
The integrand is nonnegative and is positive wherever $g|_T$
has nonzero differential. Thus $g$ is constant on every regular
fiber. The image of the proper map $(f_X,g)$ is an analytic curve
whose projection to the first factor has degree one. Its normalization
therefore gives a holomorphic map $h:\PP^1\to\PP^1$ with
$g=h\circ f_X$. Applying the same argument to $\Phi^{-1}$ shows that
$h$ is an automorphism.
The local homology monodromies at $0,1,\infty$ have respective orders
$3,4,\infty$, so $h$ fixes each of these points and is the identity.
Fiber multiplicities are preserved, giving $3k=3k'$ and $4l=4l'$.

For the second statement, fix the logarithmic transformation data and vary $u$ in a small disc
about any allowed nonzero value $u_*$. This gives a holomorphic
family, as in~\cite[Remark~8.1]{EngelArxiv}; the same construction applies
to the higher multiplicities. Over a fixed $t\in U$, the three
periods of the covering $M_t^\times$ are independent of $u$, while
the fourth period varies by a nonzero multiple of $\log(u/u_*)$
in the $\CC^\times$ direction. The first three periods span
$\CC^2$, so a complex-linear identification fixing them is the
identity. Consequently the marked torus period map is nonconstant
on every neighborhood of $u_*$.
A fixed complex torus has only countably many integral markings;
hence this holomorphic family of tori cannot be locally isotrivial.
The logarithmic transformations leave the fibers over $U$ unchanged,
and the preceding argument shows that every biholomorphism of the
total spaces preserves $t$. Thus the family of total spaces is
also not locally isotrivial. Its classifying map to the Kuranishi
space of the member at $u_*$ is nonconstant, proving that this space has
dimension at least one.
\end{proof}

\bibliographystyle{amsplain}
\bibliography{Logarithmic_S6}
\end{document}